\colorlet{mygreen}{green!70!black}
\colorlet{myred}{red!85!black}
\newcommand{\cmark}{{\color{mygreen}\ding{52}}}%
\newcommand{\xmark}{{\color{myred}\ding{56}}}%
\newcommand{\E}{\mathbb{E}}
\newtheorem{theorem}{Theorem}
\newtheorem{theorem_reset}{Theorem}
\newtheorem*{rot}{Rule of Thumb}
\begin{document}
\title{Lattice Random Walk Discretisations of Stochastic Differential Equations}

\author{Samuel Duffield}
\email{sam@normalcomputing.ai}
\author{Maxwell Aifer}
\author{Denis Melanson}
\author{Zach Belateche}
\author{Patrick J. Coles}

\affiliation{Normal Computing Corporation, New York, New York, USA}

\begin{abstract}
  We introduce a lattice random walk discretisation scheme for stochastic differential equations (SDEs)
  that samples binary or ternary increments at each step, suppressing complex drift and diffusion computations
  to simple 1 or 2 bit random values. This approach is a significant departure from traditional
  floating point discretisations and offers several advantages; including
  compatibility with stochastic computing architectures that avoid floating-point arithmetic in
  place of directly manipulating the underlying probability distribution of a bitstream,
  elimination of Gaussian sampling requirements, robustness to quantisation errors, and
  handling of non-Lipschitz drifts. We prove weak convergence and demonstrate the advantages
  through experiments on various SDEs, including state-of-the-art diffusion models.
\end{abstract}

\maketitle

\section{Introduction}


Stochastic differential equations (SDEs) are a powerful tool for modelling a wide range of
phenomena across physics, finance, biology, and machine learning. In recent years, SDEs
have become particularly crucial in modern machine learning, where they form the
mathematical foundation of diffusion models—the breakthrough technology behind 
state-of-the-art image generation systems \cite{song2020score, esser2024scaling}. This is in addition to
established fields such as molecular dynamics \cite{leimkuhler2013rational} and Bayesian
inference \cite{horowitz1991generalized, duffield2024scalable} where the simulation of complex SDEs is the
key workhorse.

Despite their widespread use, simulating SDEs presents significant computational challenges~\cite{higham2001algorithmic, burrage2004numerical}. 
SDEs are inherently continuous-time objects, which immediately creates difficulties for 
numerical implementation since digital computers can only perform discrete operations. Unlike 
ordinary differential equations, SDEs are augmented with continuous-time random noise.
This stochastic nature means that each simulation path is different, requiring multiple realizations to estimate 
statistical properties. Moreover, the interplay between deterministic drift and stochastic noise 
diffusion terms creates complex dynamics that can be sensitive to discretisation choices, 
particularly in high-dimensional systems~\cite{sarkka2019applied}. Additionally, SDE simulation is an inherently
sequential process and therefore receives little benefit from the parallelization of
modern GPU hardware.

The most widely used numerical method, Euler–Maruyama, approximates the continuous-time 
SDE by discretizing both the deterministic drift and stochastic diffusion terms. 
While conceptually simple and easy to implement, Euler–Maruyama suffers from several 
limitations: it assumes infinite numerical precision, making it vulnerable to 
quantisation errors in hardware implementations; and it can fail catastrophically for 
non-Lipschitz drift functions, which are common in modern applications~\cite{higham2012convergence, iguchi2024skew}. 
It also is not perfectly suited to digital hardware due to the assumption of infinite precision
and requirement for Gaussian sampling.

\begin{figure}[b]
    \centering
    \includegraphics[width=\textwidth]{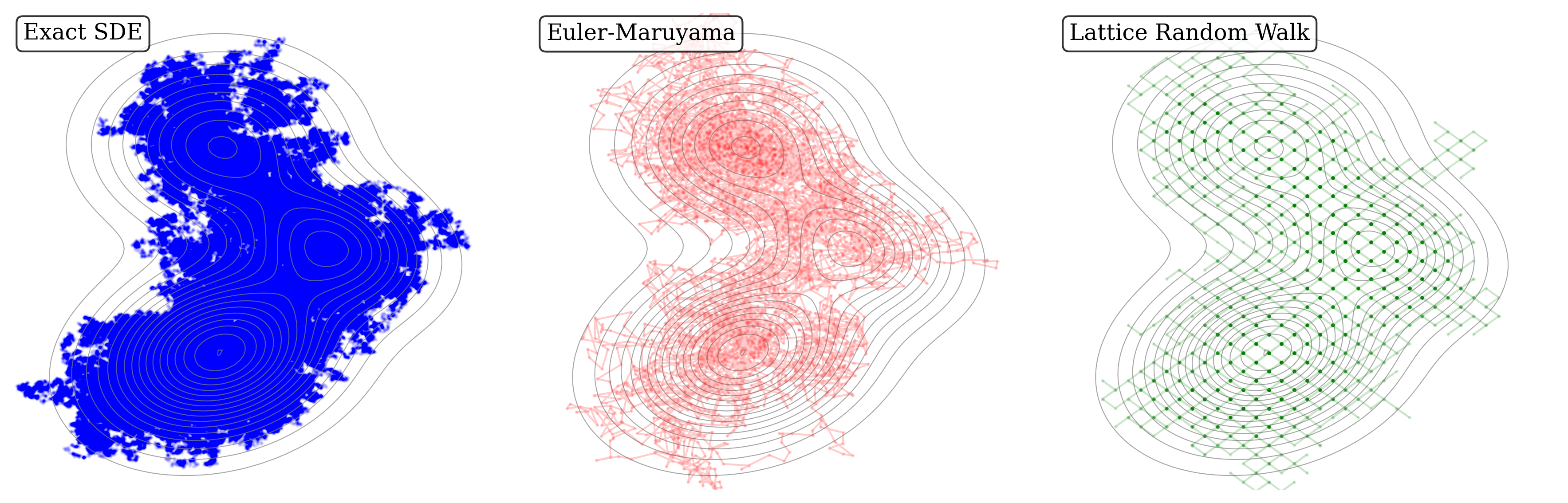}
    \caption{\textbf{Visualisation of exact SDE, Euler-Maruyama and LRW} (with equal stepsize).}
    \label{fig:toy}
\end{figure}

Specifically, our proposed lattice random walk (LRW) scheme for discretising SDEs, offers several advantages over existing methods (e.g., Euler–Maruyama), which include but may not be limited to:
\begin{itemize}
    \item \textbf{Enabling stochastic computing} for SDE simulation, unlocking potentially massive speedups on bespoke noise-based digital hardware.
    \item \textbf{No Gaussian sampling} required, which removes a non-trivial subroutine that involves transcendental functions and assumes infinite precision.
    \item \textbf{Robustness to quantisation}, the generated samples lie on an integer lattice and therefore their quantisation is directly incorporated into the weak error analysis as opposed to existing methods.
    Further the increment at each step is binary or ternary and therefore significantly more robust to error in the underlying drift and diffusion functions (such as that arising from quantisation).
    \item \textbf{Handling non-Lipschitz drifts}, as with the discretisation scheme of~\cite{iguchi2024skew}, but unlike Euler–Maruyama which is known to perform poorly for non-globally Lipschitz drifts \cite{higham2002strong}.
\end{itemize}
The LRW discretization as well as Euler-Maruyama and a continuous-time trajectory are visualized on a toy two-dimensional distribution in Fig.~\ref{fig:toy}, where one can clearly see the drastic difference in approaches between assuming infinite precision and traversing a lattice.

The paper is structured as follows: In Section~\ref{sec:lrw} we introduce the lattice
random walk discretisation, establish its (weak) convergence
and then describe related work in Section~\ref{sec:related_work}. In Section~\ref{sec:advantages} we expand
on each of the aforementioned advantages of LRW and discuss its implementation. In
Section~\ref{sec:experiments}, we present experiments demonstrating the advantages of LRW
as well as its scalability to state-of-the-art image diffusion models. In
Section~\ref{sec:discussion} we discuss the results, limitations as well as
future work and potential for impact.

\section{Lattice Random Walk}\label{sec:lrw}

We consider methods for discretizing stochastic differential equations (SDEs), which in full generality have the form
\begin{equation}\label{eq:sde}
    dx = f(x, t) dt + \sigma(x, t) dw,
\end{equation}
where $f: \mathbb{R}^d \times [0, \infty) \to \mathbb{R}^d$ is denoted the \textit{drift}
vector, $\sigma: \mathbb{R}^d \times [0, \infty) \to \mathbb{R}^{d \times d}$ is
denoted the \textit{diffusion} matrix and $dw$ is a standard multivariate Brownian motion.

The most widely used numerical method for discretizing SDEs is the Euler–Maruyama (EM) scheme,
which approximates the continuous-time SDE \eqref{eq:sde} by discretizing both the
deterministic drift and stochastic diffusion terms. The scheme takes the form
\begin{equation}\label{eq:euler-maruyama}
    x_{t+\delta_t} = x_t + \delta_t f(x_t, t) + \sqrt{\delta_t} \sigma(x_t, t) \xi_t,
    \qquad \xi_t \sim \mathrm{N}(\xi \mid 0, I),
\end{equation}
for temporal stepsize $\delta_t \in (0, \infty)$.

In stark contrast, our LRW scheme discretizes the SDE \eqref{eq:sde} by sampling a ternary-valued increment at each step.
Specifically, we consider the following multivariate discrete-time ternary-valued update:
\begin{align}
    x_{t+\delta_t} &= 
    x_t + \Delta(x_t, t), \nonumber \\
    \mathbb{P}[\Delta_i(x, t) = \Delta_i]
    &=
    \begin{cases}
        p_{-,i}(x,t), &\text{if } \Delta_i = -\delta_{x,i}, \\
        1 - p_{-,i}(x,t) - p_{+,i}(x,t), &\text{if } \Delta_i = 0, \\
        p_{+,i}(x,t), &\text{if } \Delta_i = \delta_{x,i}, \\
    \end{cases} \label{eq:lrw}
\end{align}
where $i$ indexes the coordinate of a $d$-dimensional vector. Here, the probability vectors depend on the SDE \eqref{eq:sde} and are defined as
\begin{equation}\label{eq:ternary-p}
    p_\pm(x, t) = \frac{1}{2} \delta_t\delta_x^{-1} \left[\pm f(x, t) +\delta_x^{-1}\sigma(x, t)^2\right].
\end{equation}

Here, and from now on, we assume $\sigma(x,t)$ to be diagonal and behave as a vector with powers and multiplication understood elementwise, we elaborate on the restrictiveness of this assumption in Section~\ref{subsec:diag_diffusion}.
The parameter $\delta_x \in (0, \infty)^{d}$ is a spatial stepsize vector, whereas $\delta_t \in (0, \infty) $ is a temporal stepsize scalar (as in Euler-Maruyama).

The motivation behind the probabilities \eqref{eq:ternary-p} becomes apparent upon the calculation of the first and second increment moments
\begin{align}\label{eq:ternary-moments}
\begin{split}
    \E[x_{t+\delta_t} - x_t \mid x_t] &= \delta_t f(x_t, t), \\
    \E[(x_{t+\delta_t} - x_t)^2 \mid x_t] &= \delta_t\sigma(x_t, t)^2.
\end{split}
\end{align}
Intuitively, we would then expect the discrete scheme to converge to the true SDE~\eqref{eq:sde} as $\delta_t \to 0$. This can be formalised with the notions of weak and strong convergence \cite{kloeden1992stochastic, pavliotis2014stochastic}.

\subsection{Weak Convergence}\label{subsec:weak_convergence}
Weak convergence measures how well a discretisation scheme approximates the statistical
properties of the true SDE solution. Unlike strong convergence, which concerns individual
sample paths, weak convergence
focuses on the accuracy of expectations of test functions evaluated at the discretized
solution. This in some sense is a necessary requirement for an SDE discretisation scheme
as it ensures expectations with respect to the true SDE are recovered as $\delta_t \to 0$.
Specifically, a method with weak order $p$ guarantees that expectations converge to the true SDE
solution at a rate of $O(\delta_t^p)$ as the temporal stepsize $\delta_t \to 0$, ensuring
that the discretisation faithfully recovers the underlying continuous-time stochastic process.

\begin{theorem}[Weak convergence of the LRW discretisation]
Consider the SDE \eqref{eq:sde} with drift function $f(x,t)$ and diagonal diffusion
matrix $\sigma(x,t)$ that are sufficiently smooth.
Let $\varphi: \mathbb{R}^d \to \mathbb{R}$ be a test function with bounded derivatives. 
Then the LRW discretisation~\eqref{eq:lrw} with spatial stepsize
$\delta_{x, i} = \Theta(\sqrt{\delta_t})$ has weak order 1, i.e.,
\[
\bigl|\E[\varphi(x_N)] - \E[\varphi(X(T))]\bigr| = O(\delta_t),
\]
where $x_N$ is the discretized solution at time $T = N\delta_t$ and $X(T)$ is the true SDE solution.
\end{theorem}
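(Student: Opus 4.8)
The plan is to use the classical one-step-error route to weak convergence (Milstein; Talay--Tubaro; cf.\ \cite{kloeden1992stochastic}): bound the local weak error by $O(\delta_t^2)$, control the numerical moments, and sum the $N=T/\delta_t$ contributions. Write $t_n = n\delta_t$ and let $x_n$ be the LRW iterates \eqref{eq:lrw} with the coordinate updates drawn independently and $x_0 = X(0)$. Introduce the value function $u(x,s) = \E[\varphi(X(T))\mid X(s)=x]$; under the smoothness assumptions on $f,\sigma$ and the boundedness of the derivatives of $\varphi$, it solves the backward Kolmogorov equation $\partial_s u + \mathcal{L}_s u = 0$ with $u(\cdot,T)=\varphi$, where $\mathcal{L}_s = \sum_i f_i(\cdot,s)\partial_i + \tfrac12\sum_i\sigma_i(\cdot,s)^2\partial_i^2$ is diagonal in the second-order part because $\sigma$ is assumed diagonal, and $u(\cdot,s)$ has derivatives up to order $4$ bounded uniformly in $s\in[0,T]$. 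Using $u(\cdot,T)=\varphi$ and the Markov property of the true diffusion, the global error telescopes:
\[
\E[\varphi(x_N)] - \E[\varphi(X(T))] \;=\; \sum_{n=0}^{N-1}\Big(\E[u(x_{n+1},t_{n+1})] - \E[u(x_n,t_n)]\Big),
\]
and each summand equals the expectation of the one-step weak error of the LRW update against the exact transition over $[t_n,t_{n+1}]$, evaluated on the smooth test function $u(\cdot,t_{n+1})$.

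Next I would compute the moments of the one-step increment $\Delta(x,t)$ from \eqref{eq:ternary-p}. Since $p_{+,i}-p_{-,i} = \delta_t\delta_{x,i}^{-1}f_i$ and $p_{+,i}+p_{-,i}=\delta_t\delta_{x,i}^{-2}\sigma_i^2$, independence of the coordinate updates gives exactly $\E[\Delta_i\mid x] = \delta_t f_i(x,t)$ and $\E[\Delta_i\Delta_j\mid x] = \delta_t\sigma_i(x,t)^2\,\mathbf{1}_{\{i=j\}} + \delta_t^2 f_i f_j\,\mathbf{1}_{\{i\neq j\}}$, i.e.\ the first and second (co)moments of \eqref{eq:ternary-moments}, matched to $O(\delta_t^2)$; and every increment moment of total order $\ge 3$ picks up a factor $\delta_{x,i}^2$ (e.g.\ $\E[\Delta_i^3\mid x]=\delta_{x,i}^2\delta_t f_i$, $\E[\Delta_i^4\mid x]=\delta_{x,i}^2\delta_t\sigma_i^2$), hence is $O(\delta_t^2)$ precisely because of the prescribed scaling $\delta_{x,i}=\Theta(\sqrt{\delta_t})$. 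An It\^o--Taylor expansion of the exact transition gives $\E[(X_i-x_i)\mid x]=\delta_t f_i + O(\delta_t^2)$, $\E[(X_i-x_i)(X_j-x_j)\mid x]=\delta_t\sigma_i^2\mathbf{1}_{\{i=j\}}+O(\delta_t^2)$, and all third/fourth increment moments $O(\delta_t^2)$. Comparing the two and Taylor-expanding $u(\cdot,t_{n+1})$ about $x_n$ to third order---with the remainder bounded by the (uniformly bounded) fourth derivative of $u$ against the fourth-moment bound---yields a one-step error of the form $C\,(1+|x_n|^{q})\,\delta_t^2$ for some $q\ge 0$ and a constant $C$ depending only on bounds of $f,\sigma,\varphi$ and their derivatives.

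Summing the $N = T/\delta_t$ one-step errors then gives $O(\delta_t)$, provided the numerical scheme enjoys a uniform moment bound $\sup_{0\le n\le N}\E[1+|x_n|^{q}] \le C_T<\infty$; this is the remaining ingredient. I would also record, as part of the setup, that the scaling $\delta_{x,i}=c_i\sqrt{\delta_t}$ makes \eqref{eq:ternary-p} a genuine probability vector for all small enough $\delta_t$: one computes $p_{\pm,i}= \sigma_i^2/(2c_i^2) \pm f_i\sqrt{\delta_t}/(2c_i)$ and $p_{+,i}+p_{-,i}=\sigma_i^2/c_i^2$, so non-negativity holds for small $\delta_t$ under mild non-degeneracy of $\sigma$, and $p_{+,i}+p_{-,i}\le 1$ holds once $c_i^2\ge \sup_{x,t}\sigma_i(x,t)^2$, i.e.\ the spatial step is at least of diffusive size.

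I expect the main obstacle to be exactly the uniform-in-$\delta_t$ moment bound on the LRW iterates (together with the $C^4$ regularity and uniform derivative bounds for $u$). For non-globally-Lipschitz drift the naive Gronwall estimate fails; one needs the drift's growth controlled---e.g.\ a one-sided Lipschitz/coercivity condition such as $\langle x-y, f(x,t)-f(y,t)\rangle \le C|x-y|^2$ and $\langle x,f(x,t)\rangle\le C(1+|x|^2)$---to keep $\E[|x_n|^{q}]$ from exploding, and proving this for the ternary scheme (whose increments are a.s.\ bounded by $\delta_{x,i}$ but whose per-step drift bias is only $\delta_t f_i$) is the technical heart of the argument; it is also precisely where the advertised robustness to non-Lipschitz drift must be earned. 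A secondary point is that every $O(\delta_t^2)$ local-error constant must be shown to be at most polynomial in $x_n$, so that it survives being combined with the moment bound in the final summation.
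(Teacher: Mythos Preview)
Your proposal is correct and follows essentially the same route as the paper: compute the LRW increment moments from \eqref{eq:ternary-p}, Taylor-expand the test function, identify the generator contribution, and use $\delta_{x,i}=\Theta(\sqrt{\delta_t})$ to make every surplus term $O(\delta_t^2)$ locally, hence $O(\delta_t)$ globally after summing the $N=T/\delta_t$ steps. If anything, your treatment is more careful than the paper's---you make the telescoping via the backward-Kolmogorov value function $u$ explicit and you flag the uniform moment bound and the regularity of $u$ as the places where the ``sufficiently smooth'' hypotheses do real work, whereas the paper simply writes the global error as $N\cdot O(\delta_t^2)$.
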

\begin{proof}
    Provided in Appendix~\ref{sec:weakproof}.
\end{proof}

Here we use the notation $\Theta(\cdot)$ to denote a function that is bounded above and below
by constant multiples of the argument.

The weak order 1 result ensures that the LRW discretisation
recovers the true SDE solution in the limit of small temporal stepsize and its rate
matches that of Euler-Maruyama. There are of course higher order methods that can
achieve higher weak order convergence rates, such the general class of stochastic Runge-Kutta methods \cite{debrabant2008classification}, see Table~\ref{tab:sde-methods}.
Often these are more complex to implement (such as requiring many queries to the drift
function per iteration) or require some restriction on the general SDE~\eqref{eq:sde}.


\subsection{Selecting $\delta_x$}\label{subsec:dx_constraints}


In the Euler-Maruyama discretisation \eqref{eq:euler-maruyama}, there is a single tuning parameter in $\delta_t$; for LRW we also have $\delta_t$ (which behaves in the same way as for EM controlling the level of temporal discretisation error and the number of iterations needed to reach a specified time $T$). However, in addition, we have the spatial stepsize $\delta_x$ which we now give some intuition on this new tuning parameter's behaviour and specification.


For a valid ternary distribution, we have two constraints
\begin{equation}\label{eq:combined_constraints}
    \begin{aligned}
      \min(p_-,p_+) &\ge 0,\\
      p_- + p_+     &\le 1
    \end{aligned}
    \quad\Longrightarrow\quad
    \begin{aligned}
      \delta_x^{-1}\,\sigma(x,t)^2 &\ge |f(x,t)|,\\
      \delta_x^2                   &\ge \delta_t\,\sigma(x,t)^2
    \end{aligned}
  \end{equation}
where we have used \eqref{eq:ternary-p} and $|\cdot|$ represents elementwise absolute
value with the inequality required across all dimensions.



\begin{theorem}[Allowable range for $\delta_x$]
    For the probabilities in \eqref{eq:ternary-p} to represent a valid ternary distribution $\delta_x$ must satisfy
    \begin{align}\label{eq:allowable}
    \sqrt{\delta_t} \sigma(x,t) \leq \delta_x \leq \frac{\sigma(x,t)^2}{|f(x,t)|},
    \end{align}
     for specified $f$, $\sigma$ and $\delta_t$.
\end{theorem}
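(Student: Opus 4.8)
The plan is to translate the two validity conditions on the ternary probabilities directly into inequalities on $\delta_x$, treating the upper and lower bounds separately and elementwise. First I would substitute the explicit form \eqref{eq:ternary-p} of $p_\pm(x,t)$ into the nonnegativity requirement $\min(p_-,p_+)\ge 0$. Since $\delta_t>0$ and $\delta_x>0$ in every coordinate, the prefactor $\tfrac12\delta_t\delta_x^{-1}$ is strictly positive, so the nonnegativity of both $p_-$ and $p_+$ is equivalent to $\delta_x^{-1}\sigma(x,t)^2 \ge f(x,t)$ and $\delta_x^{-1}\sigma(x,t)^2 \ge -f(x,t)$ holding simultaneously, i.e.\ $\delta_x^{-1}\sigma(x,t)^2 \ge |f(x,t)|$ elementwise. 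Rearranging — multiplying through by $\delta_x/|f(x,t)|$ in coordinates where $f\neq 0$, and noting the constraint is vacuous where $f=0$ — yields the upper bound $\delta_x \le \sigma(x,t)^2/|f(x,t)|$.

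Next I would handle the normalisation requirement $p_-+p_+\le 1$. Adding the two expressions in \eqref{eq:ternary-p}, the $\pm f$ terms cancel and one is left with $p_-+p_+ = \delta_t\,\delta_x^{-2}\sigma(x,t)^2$. Imposing this to be at most $1$ and rearranging gives $\delta_x^2 \ge \delta_t\,\sigma(x,t)^2$, and taking positive square roots (valid since all quantities are positive) gives the lower bound $\delta_x \ge \sqrt{\delta_t}\,\sigma(x,t)$. Combining the two bounds produces \eqref{eq:allowable}. In effect these are exactly the two implications already displayed in \eqref{eq:combined_constraints}, so the theorem is largely a restatement, the only additional content being the elementwise algebraic rearrangement into the sandwiched form.

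There is no genuinely hard step here; the one place that needs care is simply that \eqref{eq:allowable} is a vector (elementwise) inequality that must hold in every coordinate $i$ at once, and that in coordinates where $f_i(x,t)=0$ the upper bound degenerates to $+\infty$ and imposes no constraint. I would also add the remark — not needed for the proof, but clarifying — that the interval is nonempty precisely when $\sqrt{\delta_t}\,|f(x,t)| \le \sigma(x,t)$ elementwise, which is why the weak-convergence choice $\delta_{x,i} = \Theta(\sqrt{\delta_t})$ is admissible for sufficiently small $\delta_t$ whenever $\sigma$ is bounded away from zero.
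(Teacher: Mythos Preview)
Your proposal is correct and matches the paper's approach exactly: the paper's proof is the single line ``Follows from basic manipulations of \eqref{eq:combined_constraints},'' and what you have written is precisely those manipulations spelled out, including the elementwise caveat and the $f_i=0$ degeneracy. The additional remark about nonemptiness of the interval is simply the content of the next theorem (the feasibility condition), so it is a reasonable aside but not part of this proof.
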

\begin{proof}
    Follows from basic manipulations of \eqref{eq:combined_constraints}.
\end{proof}

\begin{theorem}[Feasibility condition for $\delta_x$]
    For the range in \eqref{eq:allowable} to be non-empty we must have
    \begin{align}\label{eq:feasibility}
    \sigma(x,t)^2 \geq \delta_t |f(x,t)|^2.
    \end{align}
\end{theorem}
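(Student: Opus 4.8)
The plan is to observe that the closed (elementwise) interval in \eqref{eq:allowable} is non-empty exactly when its lower endpoint does not exceed its upper endpoint, and then to rearrange that single scalar inequality, coordinate by coordinate, into \eqref{eq:feasibility}.

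First I would write $a_i = \sqrt{\delta_t}\,\sigma_i(x,t)$ and $b_i = \sigma_i(x,t)^2/|f_i(x,t)|$ for the two endpoints in coordinate $i$, adopting the convention $b_i := +\infty$ when $f_i(x,t)=0$ (in which coordinate there is nothing to check). Since any admissible $\delta_x$ must satisfy \eqref{eq:allowable} in every coordinate, the allowable range is non-empty if and only if $a_i \le b_i$ for all $i$. Next I would manipulate $a_i \le b_i$: using that $\sigma_i(x,t) > 0$ — strict positivity of the diffusion coefficient, which is already implicit in \eqref{eq:ternary-p} being well-posed — I multiply both sides by $|f_i(x,t)|/\sigma_i(x,t) \ge 0$ to obtain $\sqrt{\delta_t}\,|f_i(x,t)| \le \sigma_i(x,t)$, and then square the two non-negative sides to get $\delta_t\,|f_i(x,t)|^2 \le \sigma_i(x,t)^2$, which is \eqref{eq:feasibility} in coordinate $i$. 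Collecting over $i$ yields the stated condition.

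There is no genuine obstacle here; the statement follows by basic manipulation, as claimed. The only points deserving a word of care are the degenerate case $f_i(x,t)=0$ (handled by the $b_i=+\infty$ convention) and the invocation of $\sigma_i(x,t)>0$, which is what makes every step above a reversible equivalence — so that \eqref{eq:feasibility} is not merely necessary but in fact also sufficient for the interval in \eqref{eq:allowable} to be non-empty.
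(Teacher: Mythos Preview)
Your proposal is correct and follows exactly the same route as the paper: the paper's proof is simply ``Follows from setting equality in \eqref{eq:allowable}'', which is precisely your observation that the interval is non-empty iff its lower endpoint does not exceed its upper endpoint, followed by the obvious rearrangement. Your version is in fact more careful than the paper's, spelling out the coordinatewise argument, the degenerate case $f_i(x,t)=0$, and the role of $\sigma_i(x,t)>0$.
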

\begin{proof}
    Follows from setting equality in \eqref{eq:allowable}.
\end{proof}

This feasibility condition makes the stochastic nature of the discretisation apparent.
Since we need $\delta_t > 0$, \eqref{eq:feasibility} implies we also need
$\sigma(x,t)^2 > 0$, thus the discretisation is inherently stochastic and does not
reduce to an ordinary differential equation (ODE) discretisation in the same way
Euler-Maruyama does with $\sigma(x,t)=0$.
However, as we will see in Section~\ref{sec:lrw_odes}, we can adjust
the LRW discretisation into one that is consistent for deterministic ODEs.



The first criterion we have to satisfy is the feasibility condition \eqref{eq:feasibility}. The user will have freedom to choose $\delta_t$ and in some cases $\sigma(x,t)$. Thus, they can either decrease $\delta_t$ or increase $\sigma(x,t)$ to ensure the feasibility condition is met. In practice, we may not have intricate knowledge of $|f(x,t)|$ for each dimension and varying $x$ and $t$ thus the user chooses $\delta_t$ and perhaps $\sigma(x,t)$ to the best of their knowledge to ensure confidence in the feasibility condition is met. With the tradeoff being that decreasing $\delta_t$ increases the number of steps to get to a given time $T$ whilst increasing $\sigma(x,t)$ increases the noise in the SDE, which may be undesirable (if indeed it can be tuned).

Once $\delta_t$ and $\sigma(x,t)$ are set and we have confidence that the feasibility condition is met, we can turn to the selection of $\delta_x$. As mentioned, we typically have little knowledge on the range of $f(x,t)$ but more so on the range of $\sigma(x,t)$ (i.e. in the common case of fixed $\sigma(x,t) = \sigma$). We can often ensure the lower bound of \eqref{eq:allowable} is met since it only requires knowledge of $\sigma(x,t)$ and $\delta_t$, this gives us a rule of thumb for specifying $\delta_x$.
\begin{rot}Setting $\delta_x$ according to
\begin{equation}\label{eq:rot}
    \delta_x = \sqrt{\delta_t} \sigma_\text{max},
\end{equation}
for $\sigma_\text{max} \geq \sigma(x,t)$, ensures the lower bound in \eqref{eq:allowable} is satisfied. Then the upper bound is also satisfied so long as the feasibility condition \eqref{eq:feasibility} holds.
\end{rot}

\begin{theorem}[Reduction from ternary to binary]
    For a single LRW iteration, where the feasibility condition \eqref{eq:feasibility} holds, then setting
    \begin{equation*}
        \delta_x = \sqrt{\delta_t} \sigma(x,t),
    \end{equation*}
    reduces the ternary distribution \eqref{eq:ternary-p} to a binary distribution with $\mathbb{P}[\Delta_i(x, t) = 0] = 0$.
    Therefore if we have constant $\sigma(x,t)=\sigma$, the ternary update with rule of thumb reduces to binary at all iterations.
\end{theorem}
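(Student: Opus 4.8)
The plan is to evaluate the total "move probability" $p_{-,i}+p_{+,i}$ directly from the definition \eqref{eq:ternary-p} and show it equals one under the prescribed $\delta_x$, so that the "stay" probability vanishes. First I would add the two probabilities in \eqref{eq:ternary-p}; the drift contributions $\pm f$ cancel, leaving
\begin{equation*}
p_{-,i}(x,t) + p_{+,i}(x,t) = \delta_t\,\delta_{x,i}^{-2}\,\sigma_i(x,t)^2 .
\end{equation*}
Substituting $\delta_{x,i} = \sqrt{\delta_t}\,\sigma_i(x,t)$, so that $\delta_{x,i}^{2} = \delta_t\,\sigma_i(x,t)^2$, makes the right-hand side exactly $1$. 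Hence $\mathbb{P}[\Delta_i(x,t)=0] = 1 - p_{-,i} - p_{+,i} = 0$, which is the first assertion.

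Next I would confirm that the surviving two-point law is a valid distribution, i.e. $p_{\pm,i}\ge 0$. With the same substitution one finds the clean form $p_{\pm,i}(x,t) = \tfrac12\bigl(1 \pm \sqrt{\delta_t}\,\sigma_i(x,t)^{-1} f_i(x,t)\bigr)$, so nonnegativity is equivalent to $\delta_t\, f_i(x,t)^2 \le \sigma_i(x,t)^2$ — which is exactly the feasibility condition \eqref{eq:feasibility} assumed in the hypothesis. Equivalently, $\delta_x = \sqrt{\delta_t}\,\sigma(x,t)$ is the left endpoint of the allowable interval \eqref{eq:allowable}, which the Feasibility theorem guarantees is non-empty precisely when \eqref{eq:feasibility} holds. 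Thus $\Delta_i$ takes only the two values $\pm\delta_{x,i}$ with probabilities $p_{\pm,i}$: the update is binary.

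Finally, for the "all iterations" claim with constant $\sigma(x,t)\equiv\sigma$, I would observe that the rule of thumb \eqref{eq:rot} with the tight choice $\sigma_{\max}=\sigma$ gives $\delta_x = \sqrt{\delta_t}\,\sigma$, which coincides with the binary value above and, being independent of $x_t$ and $t$, is the same lattice spacing at every step $t = 0, \delta_t, 2\delta_t, \dots$. Applying the single-iteration argument at each step then yields $\mathbb{P}[\Delta_i = 0] = 0$ throughout the trajectory.

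I expect no genuine obstacle here — every step is elementary algebra plus an appeal to the already-established Feasibility theorem. The only point needing care is explaining why the general statement is restricted to a single iteration: when $\sigma$ is not constant, the binary choice $\delta_x = \sqrt{\delta_t}\,\sigma(x_t,t)$ would vary with the state, conflicting with maintaining a fixed integer lattice across steps, so the clean "all iterations" corollary is available only in the constant-$\sigma$ case.
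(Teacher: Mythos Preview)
Your proposal is correct and follows essentially the same route as the paper, which simply remarks that the result is direct from \eqref{eq:combined_constraints}: setting $\delta_x=\sqrt{\delta_t}\,\sigma(x,t)$ makes the second constraint in \eqref{eq:combined_constraints} an equality, i.e.\ $p_-+p_+=1$, while the feasibility condition \eqref{eq:feasibility} guarantees the first constraint $p_\pm\ge 0$. Your write-up just unpacks this explicitly and adds the clean closed form $p_{\pm,i}=\tfrac12(1\pm\sqrt{\delta_t}\,\sigma_i^{-1}f_i)$, which is a nice touch but not a different approach.
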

\begin{proof}
    Direct from \eqref{eq:combined_constraints}.
\end{proof}


We also note that, although we have described~\eqref{eq:rot} as a rule of thumb for setting
$\delta_x$ for a specified $\delta_t$, it also satisfies the condition
$\delta_x = \Theta(\sqrt{\delta_t})$ required for weak convergence in Section~\ref{subsec:weak_convergence}.



In practice, we likely cannot guarantee~\eqref{eq:feasibility} globally, particularly in
the case of non-globally Lipschitz drifts, Section~\ref{subsec:non-lipshitz}. In this
case we can still obtain a stable discretisation by clipping the probabilities
appropriately. That is, we can clip the coordinates of $\sigma(x, t)$ so that
$\delta_t \delta_x^{-2} \sigma(x, t)^2 \leq 1$ which ensures $p_- + p_+ \leq 1$.
Then clip the coordinates of $f(x,t)$ such that $-\delta_t \delta_x^{-2} \sigma(x, t)^2 \leq \delta_t \delta_x^{-1} f(x, t) \leq \delta_t \delta_x^{-2} \sigma(x, t)^2$
to ensure $p_-, p_+ \geq 0$. This clipping naturally modifies the moments~\eqref{eq:ternary-moments},
but as $\delta_t \to 0,$ the possibility of clipping disappears, therefore does not
affect weak convergence.

For drift or diffusion functions that have a large
dynamic range over the course of a SDE trajectory, we can also generalize
$\delta_x \to \delta_x(t, x)$ to
be a function of $t$ or even $x$ so long as we still have $\delta_{x, i}(x,t) = \Theta(\sqrt{\delta_t})$. This is particularly useful for SDEs with time-varying
diffusion functions, such as variance-exploding diffusion models~\cite{song2020score}.

\subsection{Diagonal Diffusion Assumption}\label{subsec:diag_diffusion}

The LRW probabilities in \eqref{eq:ternary-p} as written make the
assumption that $\sigma(x,t)$ is diagonal. This is a strong assumption, but it covers
the majority of applications including Langevin-based sampling from Bayesian posteriors
\cite{duffield2024scalable} or molecular dynamics equilibrium distributions~\cite{leimkuhler2013rational},
as well as modern machine learning diffusion models~\cite{song2020score}.
We note that this diagonal diffusion limitation also features in
the discretisation scheme of Ref.~\cite{iguchi2024skew}.

Additionally, general SDEs can be converted into one with a constant and identity
diffusion matrix via a Lamperti transform. This is detailed in Appendix~\ref{sec:lamperti}
for the simpler case when the (dense) diffusion matrix depends on $t$ but not $x$, for
the complete case see~\cite{moller2010state}. Thus the majority of SDEs with dense diffusion
matrices (those that depend on $t$ but not $x$) can be handled easily by the LRW
discretisation via the Lamperti transform and therefore we leave the general case of a
space-dependent dense diffusion matrices as future work.

\begin{table}[h!]
\vspace{0.5cm}
  \centering
  \footnotesize
  \begin{tabular}{lcccc}
  \hline
  \textbf{Method} & \textbf{Weak order} &
  \begin{tabular}[c]{@{}c@{}}\textbf{Drift evals} \\ \textbf{per step}\end{tabular} &
  \begin{tabular}[c]{@{}c@{}}\textbf{Non-globally} \\ \textbf{Lipschitz drift}\end{tabular} & \textbf{Gaussian-free} \\
  \hline
  Euler--Maruyama \citep{kloeden1992stochastic} 
    & $1$ & $1$ & \xmark & \xmark \\
  
  Milstein \citep{Milstein1995,kloeden1992stochastic} 
    & $1$ & $1$ & \xmark & \xmark \\

  Two-point \citep{kloeden1992stochastic, Higham2001} & $1$ & $1$ & \xmark & \cmark \\
  
  
  Stochastic Runge--Kutta \citep{BurrageBurrage1996, debrabant2008classification}
    & $2$ & $\geq 3$ & \xmark & \xmark \\
  
  Tamed Euler \citep{HutzenthalerJentzenKloeden2012}
    & $1$ & $1$ & \cmark & \xmark \\
  
  Implicit Euler-Maruyama \citep{hu1996semi}
    & $1$ & Implicit solve & \cmark & \xmark \\
  
  
  \hline
  \textbf{Lattice random walk} \eqref{eq:lrw} & 1 & 1 & \cmark & \cmark \\
  \hline
  \end{tabular}
  \caption{\textbf{Common SDE discretisation methods}. Indicating weak convergence order, drift evaluations per step (calls to $f(x,t)$ to move from $t$ to $t+\delta_t$), suitability for non-Lipschitz drift, and whether Gaussian samples are required. Stochastic Runge-Kutta methods (which include the Heun method \cite{kloeden1992stochastic}) use multiple stages to achieve weak order 2~\cite{debrabant2008classification}. Implicit methods require a fixed-point solve that use a variable number of drift evaluations. Methods that require time-homogeneity (such as BAOAB-limit~\cite{leimkuhler2013rational} or the skew-symmetric method of \cite{iguchi2024skew}) are omitted.
  }
  \label{tab:sde-methods}
\end{table}

\subsection{Lattice Random Walk for Ordinary Differential Equations}\label{sec:lrw_odes}
An alternative LRW discretisation that is consistent for ODEs can be achieved with a variance that is quadratic in the stepsize $O(\delta_t^2)$ since then the variance decays 
faster than the mean as $\delta_t \to 0$.

Specifically if the increment's moments are modified to have the form
\begin{align}\label{eq:ternary-moments-ode}
\begin{split}
    \E[x_{t+\delta_t} - x_t \mid x_t] &= \delta_t f(x_t, t), \\
    \E[(x_{t+\delta_t} - x_t)^2 \mid x_t] &= \delta_t^2g(x_t, t)^2.
\end{split}
\end{align}
Then for some $g(x,t)$ independent of $\delta_t$ the discretisation will converge to the ODE $dx = f(x,t) dt$ as $\delta_t \to 0$.

Since ODEs are deterministic, we don't typically differentiate between strong and weak
error, since they coincide. In this case, we don't necessarily expect the above LRW
discretisation to have first order error $O(\delta_t)$ because of the required injected
noise. However, for many applications, such as Bayesian sampling and inference in
diffusion models in Section~\ref{subsec:exp_diffusion} a version of weak error
may still apply for ODEs where expectations are taken with respect to a random
initial distribution $p(x_0)$. A full error analysis in this setting of ODE
discretisation is left for future work.

\section{Related Work}\label{sec:related_work}

Lattice random walks have a rich history in physics and mathematics, dating back to early work on 
the development of discrete-time Markov processes on regular lattices~\cite{feller1950introduction}. These traditional approaches 
typically model discrete stochastic systems or natural phenomena that are inherently lattice-based, 
such as particle diffusion in crystalline structures~\cite{montroll1965random} or polymer dynamics on 
lattices~\cite{rubin1965random}.  In contrast,
our LRW discretisation is fundamentally different: rather than modeling discrete systems, 
we use lattice random walks as a computational tool to approximate continuous-time, continuous-space
SDEs that arise in diverse applications from molecular dynamics to machine learning.


SDEs represent an extremely broad class of continuous-time stochastic processes and
a large variety of numerical methods exist for simulating their evolution~\cite{sarkka2019applied}.
As discussed, the Euler-Maruyama method \eqref{eq:euler-maruyama} is the most widely used due to 
its simplicity and applicability to fully general SDEs~\eqref{eq:sde}. Alternative methods and their relation to LRW can be found in Table~\ref{tab:sde-methods}. Which notably includes stochastic Runge-Kutta methods \cite{debrabant2008classification} which use multiple steps to increase the weak convergence order as well as tamed~\citep{HutzenthalerJentzenKloeden2012} and implicit~\cite{hu1996semi} variants on Euler-Maruyama for improved stability to non-globally Lipschitz drift functions.


There has also been developed stochastic discretisations
which modify the Euler-Maruyama method replacing the Gaussian noise source with a
binary-valued source (so-called two-point methods) or ternary-valued source (so-called
three-point methods) \cite{kloeden1992stochastic}. However here the discrete noise
source is only applied to the diffusion term and still requires a component of the form
$x_t + \delta_t f(x_t, t)$ for the drift term which contrasts significantly with the
LRW update~\eqref{eq:lrw} where a full iteration is binary or ternary
valued.

The most closely related method to LRW is the skew-symmetric discretisation in~\cite{iguchi2024skew}, which builds on earlier work on spatially discretized stochastic processes \cite{bou2018continuous}.
This discretisation assumes a time-homogenous SDE $dx_t = f(x_t) dt + \sigma(x_t) dw_t$ and takes the form
\begin{equation*}
    x_{t+\delta_t} = x_t + b_t(x_t, \nu_t) \sqrt{\delta_t} \sigma(x_t) \nu_t,
\end{equation*}
where $\nu_t \sim \mathrm{N}(\nu \mid 0, I)$  and  $b_t(x_t, \nu_t)$
is a binary-valued random variable taking values in $\{-1, +1\}$ with probabilities
that ensure the weak order 1 convergence rate. The skew-symmetric discretisation
shares similarities with the LRW, notably that the drift computation is absorbed into the
sampling of a binary-valued random variable and that they share the same weak error and 
diagonal diffusion assumption. However, there remains notable differences including
that the skew-symmetric discretisation requires Gaussian sampling, doesn't produce
samples on a lattice, makes the assumption of time-homogeneity in the SDE, applies
the diffusion computation outside of the binary sampling as well as having a significantly
different (and more complex) form for the binary random variable probabilities.
These differences are key in enabling implementation on a stochastic computer
as well as application to time-inhomogeneous SDEs found in modern diffusion models.

\section{Advantages of Lattice Random Walk}\label{sec:advantages}

We now go into more detail on the advantages of the introduced LRW
discretisation over existing methods.

\subsection{Stochastic Computing}





Stochastic computing~\cite{gaines1967stochastic, alaghi2013survey, gross2019stochastic}
represents an alternative paradigm to traditional deterministic digital
hardware. It is based on the idea that a real number can be encoded as the underlying
probability distribution of a random bitstream. This real number can then be manipulated
by performing operations on the bitstream and never having to store the real number itself
or apply floating point arithmetic. For this reason stochastic computing can be highly
time and energy efficient. However, it suffers from the issue that many applications
and pipelines require a real valued output and therefore the aggregation of the output
bitstream which can nullify the speedup of the internal operations. Our lattice random walk
discretisation utilizes complex underlying operations but only requires a binary or ternary
random variable at each step, therefore removing the aforementioned issue of aggregation
of the output and enabling stochastic computing for the key pipeline of SDE simulation.

Stochastic computing has been applied to a range of computational tasks, leveraging its
error resilience and low hardware cost. Notable applications include the numerical integration
of ordinary differential equations~\cite{liu2017hardware, camps2021stochastic},
neural network acceleration~\cite{alaghi2013survey,li2017towards}, and probabilistic
inference~\cite{gaines1967stochastic}. Yet to our knowledge stochastic computing has not
yet been applied to the simulation of general SDEs, critically due to the lack of
stochastic hardware-compatible theoretical methods which we unlock in this work.

As an example of the potential efficiencies of stochastic computing, consider a stochastic (unipolar) number $x$
(that is a random bit generated from $\text{Bernoulli}(x)$) and a stochastic number $y$
then a new stochastic number representing the product $xy$ can be computed with a simple
$\text{AND}$ gate $\text{AND}(x, y) = xy$.
Similarly we have $\text{NAND}(x, y) = 1 - xy$ and other basic gates, see Chapter 5 in~\cite{gross2019stochastic}
(although the $\text{NAND}$ gate is universal for boolean logic). This
makes linear operations (such as matrix multiplications) in stochastic computing
extremely efficient and can be applied in a single input bit to single output bit model
of computation. And importantly avoid any floating point arithmetic or quantisation error beyond that of the original stochastic numbers generation.

\begin{wrapfigure}[35]{r}{0.25\textwidth}
  \vspace{-0.3cm}
  \centering
  \includegraphics[width=0.23\textwidth]{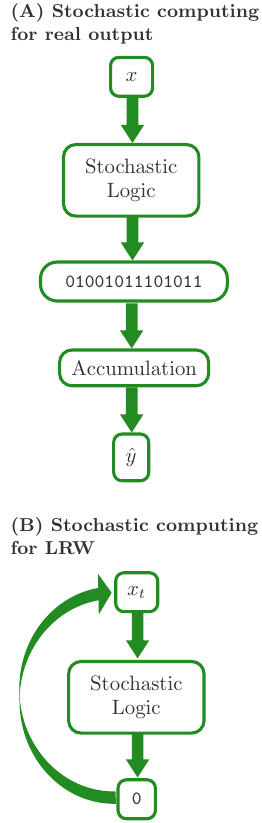}
  \caption{\textbf{Comparison of stochastic computing pipelines.} LRW enables stochastic computing for SDE simulation without bitstream accumulation.}
  \label{fig:stochastic_computing}
\end{wrapfigure}

However, non-linear operations are significantly more difficult and fundamentally require multiple
stochastic bits to generate a single output bit, that is multiple bits
$x_1, x_2, \dots \sim \text{Bernoulli}(x)$ are required
to generate a single output bit $y \sim \text{Bernoulli}(f(x))$ for non-linear $f(x)$. Still,
efficient and general-purpose non-linear univariate protocols have been developed including
finite-state machines~\cite{li2012synthesis}, Bernstein polynomials~\cite{qian2008synthesis}, Maclaurin expansions~\cite{parhi2016computing}, piecewise linear approximations~\cite{luong2019efficient} and recently Chebyshev polynomials~\cite{kind2025approximation}. The Kolmogorov-Arnold representation theorem
implies that any multivariate continuous function can be decomposed into the composition of matrix
multiplication and elementwise non-linearities, thus robust implementations of non-linear
univariate protocols combined with the linear arithmetic inherent to stochastic computing
is sufficient for universal (continuous) stochastic computation.

A key limitation of stochastic computing is that it requires a conversion between the stochastic
domain (bitstreams) and the conventional domain (real numbers). This conversion typically requires
aggregating long bitstreams to estimate probabilities, which can nullify the computational
advantages gained during the stochastic operations. This bottleneck has limited the practical
application of stochastic computing. The setting of the LRW discretisation
fundamentally alleviates this limitation. Unlike traditional SDE
discretisation methods that require floating-point arithmetic for both drift and diffusion
computations, LRW reduces each iteration to sampling a simple discrete random variable, binary
or ternary (noting that a ternary random variable can be represented by two binary random
variables).  This output inherently avoids the issue of aggregating stochastic bitstreams
back to real numbers since the required output is binary or ternary, as depicted in Figure~\ref{fig:stochastic_computing}.

We note that a stochastic integrator, a fundamental circuit used in stochastic computing ODE solvers~\cite{gaines1967stochastic, liu2017hardware, liu2020dynamic} can be viewed as a ternary update. As such, the LRW reduction of SDEs to a simple ternary update serves as an analytical description of stochastic computing that fully incorporates its inherent stochasticity, thus enabling the use of randomness as a computational parameter instead of only a source of error as in stochastic computing implementations for deterministic processes~\cite{liu2017hardware, camps2021stochastic}.

In Appendix~\ref{sec:stochastic_ou}, we give a concrete stochastic computing protocol
for LRW on linear SDEs, i.e.\ Ornstein-Uhlenbeck (OU) processes $dx = -(Ax - b)dt + \sigma dw$. It utilises
a variant on the commonly used stochastic computing multiplexer (MUX)
primitive for scaled addition~\cite{gross2019stochastic}. By computing alias tables~\cite{walker1977efficient}
in a fast preprocessing step, categorical sampling can be performed in $O(1)$ time,
enabling a single iteration of LRW with work $O(d)$ and parallel time $O(1)$ versus 
traditional matrix-vector product arithmetic which has work $O(d^2)$ and parallel time 
$O(\log d)$~\cite{blelloch1996programming}. The overall complexities required to achieve weak
error $\epsilon$ are detailed in Table~\ref{table:mux}, which in both cases requires
$O(\epsilon^{-1})$ steps since both EM and LRW have weak order 1.

We note several caveats to this protocol: (i) the MUX-based approach imposes a maximal 
stepsize constraint (see Appendix~\ref{sec:stochastic_ou}), (ii) the protocol is specific to 
linear OU processes and does not directly extend to non-linear drifts, and (iii) OU processes can 
alternatively be sampled exactly with $O(d^3)$ preprocessing via matrix 
decomposition~\cite{duffield2024thermox}, though this requires significantly more complex 
floating-point operations whilst higher-order discretisations (see Table~\ref{tab:sde-methods}) can also improve the dependency on $\epsilon$.

The key advantage of the stochastic computing approach is that 
it replaces floating-point matrix-vector products with simple bit-level operations, 
potentially enabling significant energy and latency improvements on specialized hardware.
Notably CPUs and GPUs are designed for dense floating point arithmetic whereas the
stochastic approach moves the bottleneck to repeated categorical sampling via alias tables
which represents a fundamentally different pipeline ripe for specialized hardware acceleration.

\begin{table}[h]
\centering
\begin{tabular}{lccc}
  \hline
               & \textbf{Work}        & \textbf{\begin{tabular}[c]{@{}c@{}}Parallel\\ Span Time\end{tabular}}      & \textbf{\begin{tabular}[c]{@{}c@{}}Required\\ Threads\end{tabular}}  \\ \hline
Floating Point EM & $O(\epsilon^{-1}d^2)$     & $O(\epsilon^{-1}\log d)$ & $O(d^2)$       \\
Stochastic LRW & $O(d^2 + \epsilon^{-1}d)$ & $O(d + \epsilon^{-1})$   & $O(d)$         \\
\hline
\end{tabular}
\caption{\textbf{Complexity of sampling from an OU process with traditional vs stochastic computing protocols.}
We differentiate between \emph{work} (the total number of operations), \emph{parallel span time}
(the time taken to perform the operations assuming sufficient parallelism) and
\emph{required threads} (the number of threads required to achieve the stated parallel span).
The table shows the complexities required to achieve weak error $\epsilon$ on a dense OU
process with $d$ dimensions, including both preprocessing and iterative computation.}
\label{table:mux}
\end{table}

\subsection{No Gaussian Sampling}

Almost all existing SDE discretisation methods require Gaussian sampling with
the exception of the two-point (or three-point) methods from~\cite{kloeden1992stochastic}.
Gaussian sampling on digital hardware represents a non-trivial operation since it does
not have a native representation as randomness defined over bits. Typically, Gaussian
samples are generated using a Ziggurat method~\cite{harris2020array}, which is a fairly
involved (yet efficient) rejection sampling algorithm. In contrast, the LRW discretisation
requires only sampling a binary or ternary random variable, which is a much simpler operation,
which ties the discretisation implementation much closer to digital hardware.

\subsection{Robustness to (Quantisation) Error}


The form of the LRW discretisation suggests superior robustness to general error 
in the computation of the drift $f(x)$ and diffusion $\sigma(x)$ functions
compared to traditional methods like Euler-Maruyama. This robustness stems from the fundamental
difference in how errors propagate through the computation.

In Euler-Maruyama, errors in the drift and diffusion computations directly accumulate in the 
continuous-valued update~\eqref{eq:euler-maruyama}. 
Any quantisation or numerical error in computing $f(x_t, t)$ or $\sigma(x_t, t)$ compound 
over multiple iterations.

In contrast, the LRW discretisation contracts the entire drift and diffusion 
computation into the sampling of a binary or ternary random variable. This contraction provides 
a natural form of error suppression: errors in the underlying drift and diffusion functions only 
affect the probabilities $p_\pm$ in~\eqref{eq:ternary-p}, and these probabilities are then used 
to sample discrete outcomes. The discrete nature of the output means that small perturbations 
to the probabilities often result in the same discrete outcome, providing inherent robustness.

Quantisation error is a particularly notable example of this robustness: when the drift and 
diffusion functions are computed with limited precision (as is dictated by digital hardware), 
a portion of the resulting quantisation errors in $f(x,t)$ and $\sigma(x,t)$ are absorbed into the probability 
computation rather than being directly propagated as continuous-valued errors. This makes the 
LRW discretisation particularly well-suited for implementations on quantised 
hardware or in scenarios where computational precision is limited, as the discrete output 
structure naturally mitigates the impact of quantisation artifacts that would otherwise 
accumulate in continuous-valued methods which assume infinite precision.

Quantisation of the SDE and in particular the drift function is a very prominent consideration
in the field of large scale diffusion models where the drift function comprises a very large
neural network. Thus quantisation allows significant savings in time of execution,
energy consumption and financial cost \cite{li2023q} (this is independent of potential speedups
from stochastic computing where these advantages could be amplified significantly further).

\subsection{Non-Lipschitz Drifts}\label{subsec:non-lipshitz}


We now turn to the concept of stability of SDE discretisation for a given non-zero stepsize
$\delta_t$. In this case, the discretised SDE does not have the same solution as the continuous-time SDE,
and the degree of separation depends on the discretisation method and the behaviour of
the drift and diffusion functions. Of particular interest are drift functions that are not globally Lipschitz continuous,
which informally are drift functions that cannot be bounded linearly in all areas of the state space.
In reality, global Lipschitz continuity is a very strong constraint \cite{hutzenthaler2015numerical, mao2013strong} that is not typically satisfied
by practically relevant SDEs such as those arising in chemistry~\cite{gillespie2000chemical}, finance~\cite{higham2012convergence},
modern machine learning diffusion models~\cite{song2020score} as well as Bayesian inference
as we will see in Section~\ref{subsec:exp_non_lipschitz}.

Formally, a drift function $f(x,t)$ is said to be \textit{globally Lipschitz continuous} if there exists a constant $L > 0$ such that
\begin{equation}
    \|f(x,t) - f(y,t)\| \leq L\|x - y\|,
\end{equation}
for all $x, y \in \mathbb{R}^d$ and all $t \geq 0$. 

Traditional proofs of the weak convergence of Euler-Maruyama require the drift function to be globally Lipschitz.
Whilst this can be relaxed, see~\cite{higham2002strong}, in practice Euler-Maruyama is known to perform poorly
for non-globally Lipschitz drifts. The fundamental issue 
is that the continuous-valued update~\eqref{eq:euler-maruyama} can lead to explosive behaviour when the drift 
function grows faster than linearly.

In contrast, the LRW discretisation exhibits superior stability for drifts that are not globally Lipschitz continuous. 
The key insight is that the discrete nature of the LRW update~\eqref{eq:lrw} naturally constrains the magnitude 
of each step to $\pm\delta_x$ or $0$, regardless of how large the drift function becomes. This bounded step 
size prevents the explosive behaviour that can occur in Euler-Maruyama.

More formally, we can consider the intuition of the skew-symmetric discretisation from~\cite{iguchi2024skew}
by considering the moments of the increment. For both Euler-Maruyama and LRW we have first moment
\begin{align*}
    \E[x_{t+\delta_t} - x_t \mid x_t] &= \delta_t f(x_t, t),
\end{align*}
but for the (diagonal) second moment we have
\begin{align}\label{eq:second_moment}
  \begin{split}
    \text{Euler-Maruyama: } 
    \quad \E[(x_{t+\delta_t} - x_t)^2 \mid x_t] &= \delta_t\sigma(x_t, t)^2  + \delta_t^2 f(x_t, t)^2, \\
    \text{Lattice Random Walk: } \quad \E[(x_{t+\delta_t} - x_t)^2 \mid x_t] &= \delta_t\sigma(x_t, t)^2.
  \end{split}
\end{align}

Thus for fixed stepsize $\delta_t$ the second moment of the Euler-Maruyama update
is unbounded for drifts lacking global Lipschitz continuity, whereas the second moment of the LRW update
is independent of the drift function.


\section{Experiments}\label{sec:experiments}

In this section, we will present some experiments demonstrating the advantages of the lattice random walk
discretisation. Specifically, we demonstrate robustness to quantisation error, stability for non-globally Lipschitz drifts
and scalability to large scale state-of-the-art diffusion models.

\subsection{Ornstein-Uhlenbeck Process}\label{subsec:exp_ou}

We compare the effect of floating point quantisation for Euler-Maruyama and the LRW discretisation
on traditional hardware with floating point arithmetic. Although as described in Appendix~\ref{subsec:reparam}
the LRW state can be reparameterised to integers, however the underlying arithmetic in the
drift and diffusion functions still typically requires floating point arithmetic, whose
quantisation error we will examine in this experiment.

We consider the multivariate Ornstein-Uhlenbeck process~\cite{aifer2024_TLA}
\begin{equation}\label{eq:ou}
    dx = -(A x -b) dt + \sqrt{2\mathcal{T}} dw,
\end{equation}
where $A \in \mathbb{R}^{d \times d}$ is a symmetric positive-definite matrix,
$b \in \mathbb{R}^d$ and $\mathcal{T}>0$ is a scalar temperature. The OU process has stationary
distribution $\mathrm{N}(x \mid A^{-1}b, \mathcal{T} A^{-1})$.

\begin{figure}[htbp]
  \centering
  \begin{minipage}[c]{0.48\textwidth}
    \centering
    \includegraphics[width=\textwidth]{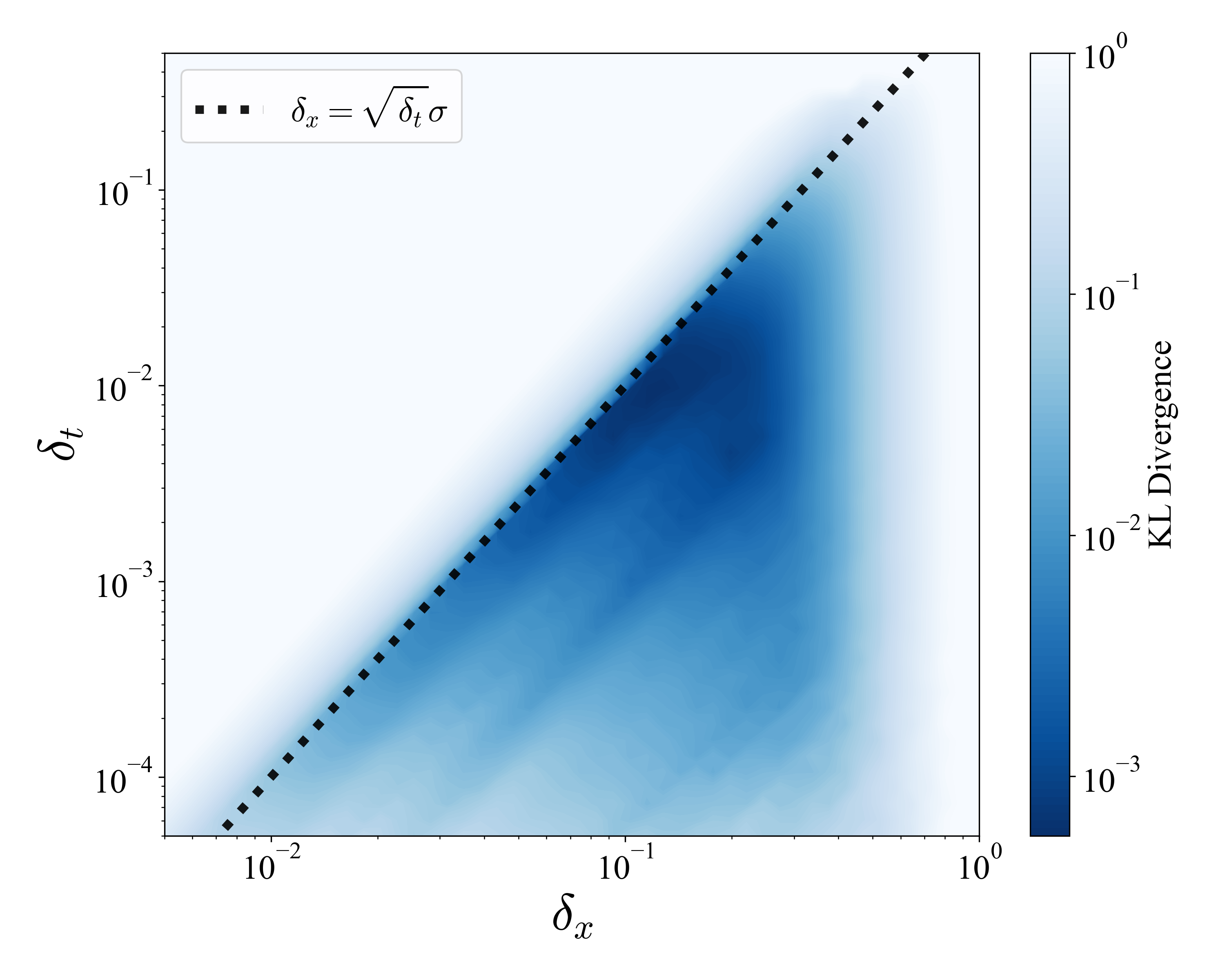}
    \caption{\textbf{Sensitivity to $\delta_x$ for an OU process}. KL divergence
    between true stationary and LRW distributions, as a function of $\delta_t$ and $\delta_x$. Averaged over 10 different seeds. The dotted line corresponds to the binary update condition~\eqref{eq:rot}.}\label{fig:delta_x}
  \end{minipage}
  \hfill
  \begin{minipage}[c]{0.48\textwidth}
    \centering
    \includegraphics[width=\textwidth]{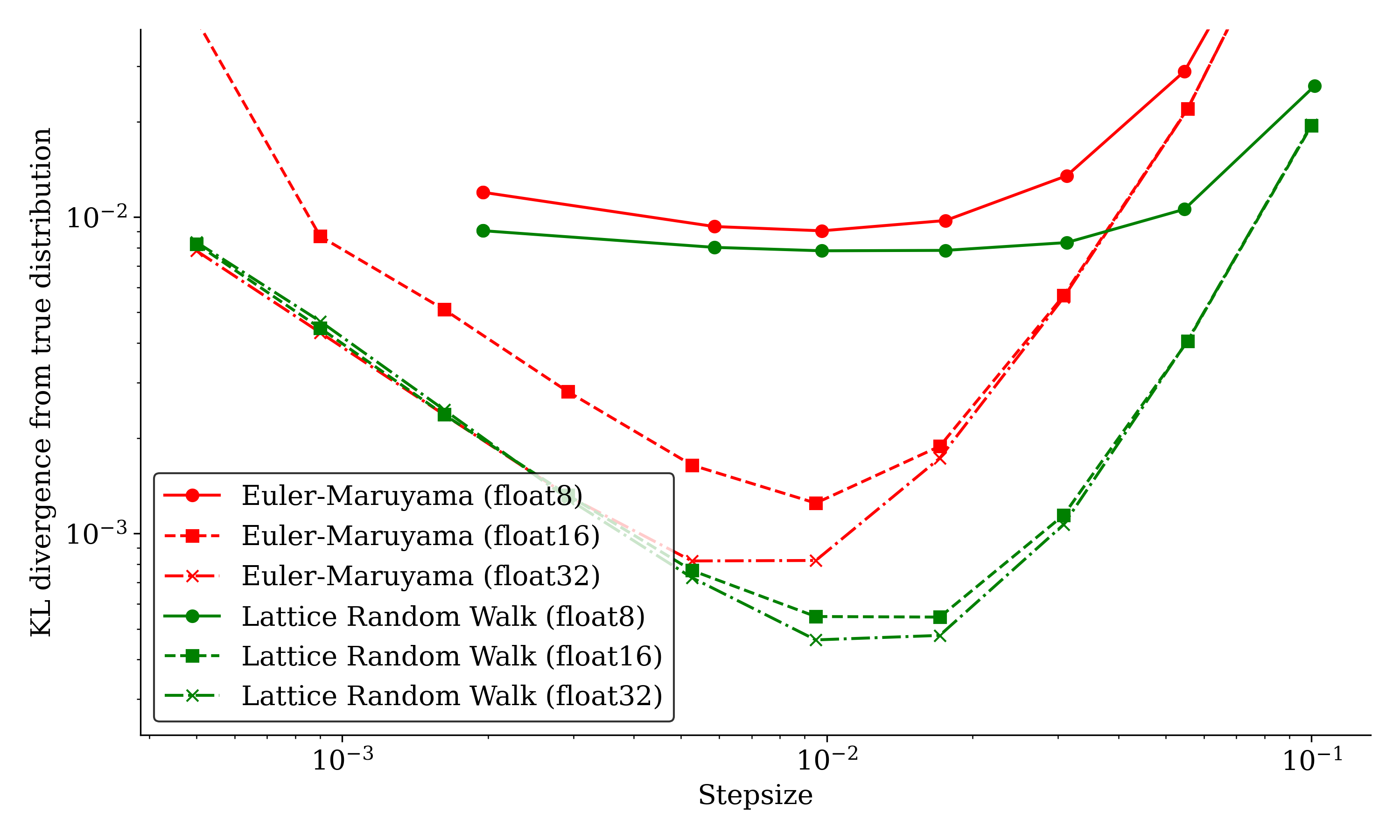}
    \caption{\textbf{Robustness to quantisation for an OU process}. KL divergence
    between true stationary distribution and the discretisations. Averaged over 50 different seeds
    and randomly sampled OU parameters.}\label{fig:quantisation}
  \end{minipage}
\end{figure}

We set $d=3$, $\mathcal{T}=0.5$ and sample $A = Z Z^\text{T} + \mathbb{I}$ where $Z\in \mathbb{R}^{d \times d}$ with
$Z_{ij} \sim \mathrm{N}(Z \mid 0, 1)$ and $b \sim \mathrm{N}(b \mid0, \mathbb{I})$. For each seed and discretisation scheme
we run $10^6$ steps (noting that it is the number of steps that is fixed rather than the time duration $T$) starting from $x_0 = 0$ discarding the first third 
as a burn-in. We measure accuracy with $\text{KL}[\hat{\pi}, \pi]$ where $\hat{\pi}$ is the Gaussian distribution
constructed from the empirical mean and covariance of the samples and $\pi$ is the true stationary distribution.

We start by examining the rule of thumb from Section~\ref{subsec:dx_constraints} in Figure~\ref{fig:delta_x}.
Here we repeat the experiment over a range of stepsizes $\delta_t$ and $\delta_x$. We can see that
setting $\delta_x$ at $\sqrt{\delta_t} \sigma$ or slightly larger results in the best performance
justifying the rule of thumb (noting that $\sigma$ is constant in this experiment). Whilst when the constraints are not met (i.e. above the rule of thumb line), performance degrades rapidly.

We then adopt this rule of thumb and set $\delta_x = \sqrt{2\delta_t \mathcal{T}}$ resulting in a binary update
for the quantisation experiment in Figure~\ref{fig:quantisation} where we vary the floating point precision
of the underlying drift and diffusion arithmetic using 8, 16 and 32 bits.
We observe that LRW is as accurate and generally more so than Euler-Maruyama for all stepsizes and precision.
Particularly for large stepsizes $\delta_t$ we observe that the LRW discretisation performs significantly better than the Euler-Maruyama
discretisation. For small stepsizes and high precision (32 bits) the Euler-Maruyama discretisation
can match the performance of the LRW discretisation but not for low precision (16 bits). For sufficiently
small stepsize the error from lack of exploration (small evolution time) dominates.
Overall, we see negligible degradation in performance for the LRW discretisation
from 32 to 16 bit precision.

It's also worth commenting that the LRW discretisation (with rule of thumb $\delta_x$) is significantly more robust to the choice of the temporal stepsize $\delta_t$, as demonstrated by the flatter bottom of the curves in Figure~\ref{fig:quantisation}.

\subsection{Poisson Random Effects Model (Non-globally Lipshitz)}\label{subsec:exp_non_lipschitz}

We now investigate sampling an SDE with a non-globally Lipschitz drift function. Following~\cite{iguchi2024skew}, we consider overdamped Langevin sampling from
a Bayesian Poisson random effects model. In particular, we consider the SDE 
\begin{align*}
  dx &= - \nabla U(x) dt + \sqrt{2} dw, \\
  U(x) &= J \sum_{i=2}^{d+1} e^{x_i} - \sum_{i=2}^{d+1} \sum_{j=1}^J y_{ij} x_i
  + \frac12 \sum_{i=2}^{d+1} (x_i - x_1)^2 + \frac{x_1^2}{2\sigma_1^2},
\end{align*}
where $x_1 \sim \mathrm{N}(x \mid 0, \sigma_1^2)$, $x_i \sim \mathrm{N}(x \mid x_1, 1)$,
$y_{ij} \sim \text{Poisson}(e^{x_i})$, for $i=2,\ldots,d+1$ and $j=1,\ldots,J$.

\begin{wrapfigure}[16]{r}{0.4\textwidth}
\vspace{-0.2cm}
  \centering
  \includegraphics[width=0.4\textwidth]{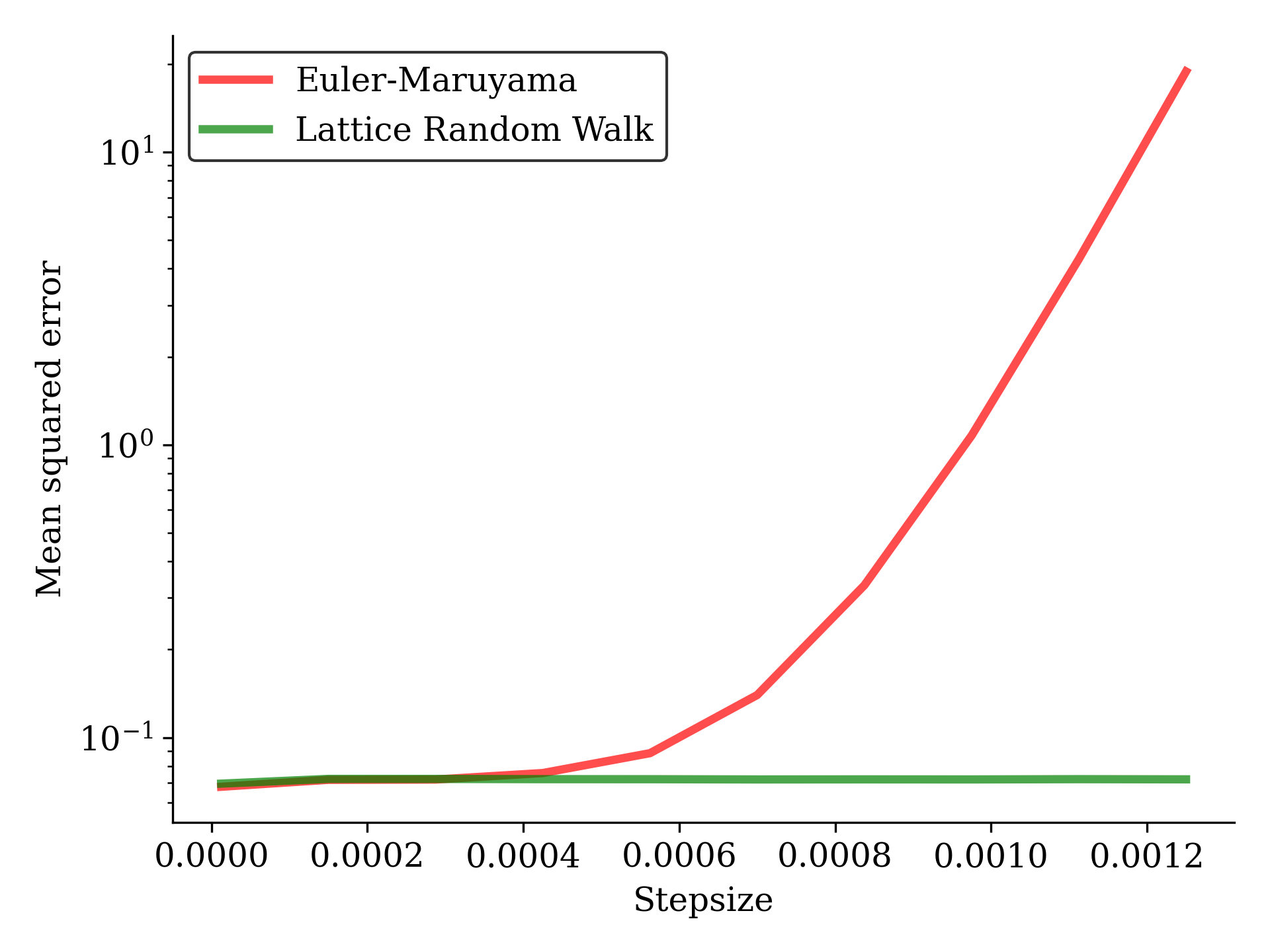}
  \caption{\textbf{Robustness to stepsize for a non-Lipschitz Poisson random effects model}.
  Error from the ergodic mean of the generated samples and the true parameter, averaged over 50 different seeds.}
  \label{fig:poisson}
\end{wrapfigure}


Following~\cite{iguchi2024skew} we set $d=51$ but only consider $x_0$ as an
interest parameter, we set $J=5$ and $\sigma_0 = 10$. We set underlying true parameters
$x^*_0 = 5$ and sample the rest $x^*_i \sim \mathrm{N}(x \mid x^*_0, 1)$. We initiate
the discretisations at the true parameters (therefore do not apply a burn-in) and
run for 50k steps. The posterior distribution in $x_0$ is well concentrated around the true parameter
$x_0^*$ and thus we report the mean squared error from the ergodic mean of the generated samples
and the true parameter. For each stepsize, we repeat each trajectory generation 100 times with
different seeds. As above we set $\delta_x = \sqrt{2\delta_t}$ using the rule of thumb \eqref{eq:rot} justified in Figure~\ref{fig:delta_x}.

We can see from Figure~\ref{fig:poisson} that the LRW discretisation
performs well across all stepsizes whereas the error in the Euler-Maruyama discretisation
explodes for larger stepsizes, as also observed in prior works \cite{hutzenthaler2011strong, iguchi2024skew}.

\subsection{Diffusion Model}\label{subsec:exp_diffusion}

In this experiment we deploy lattice random walk discretisation to a large-scale
state-of-the-art image generation model. We use the popular Stable Diffusion 3.5 model~\cite{esser2024scaling}
which has over 8 billion parameters. Stable Diffusion 3.5 is technically a flow-matching model~\cite{karras2022elucidating} and differs in
training than that of a continuous-time diffusion model~\cite{song2020score}. This class of models are
usually viewed as a deterministic ordinary differential equation at inference time.
However, as described in Appendix~\ref{sec:from_flows_to_sdes} (which expands on the exposition from \cite{karras2022elucidating}), flow-matching models can be recast as SDEs of the form
\begin{equation}\label{eq:diffusion_sde}
    dx = \dot{\varsigma}(t)\varsigma(t) s(x, t) dt + \alpha(t) s(x,t) dt + \sqrt{2 \alpha(t)}dw,
\end{equation}
where $s(x,t)$ is the learnt score function, $\varsigma(t)$ is a noise schedule (also determined by the model training),
$\dot{\varsigma}(t)$ is its time derivative and $\alpha(t)$ is a tuning parameter that controls
the level of noise added during the trajectory. Remarkably the marginal distributions
$p(x,t)$ are the same for all choices of $\alpha(t) \geq 0$~\cite{karras2022elucidating}.

\begin{figure}[h]
  \centering
  \subfloat[$\delta_t=1/50$.]{\includegraphics[width=\linewidth]{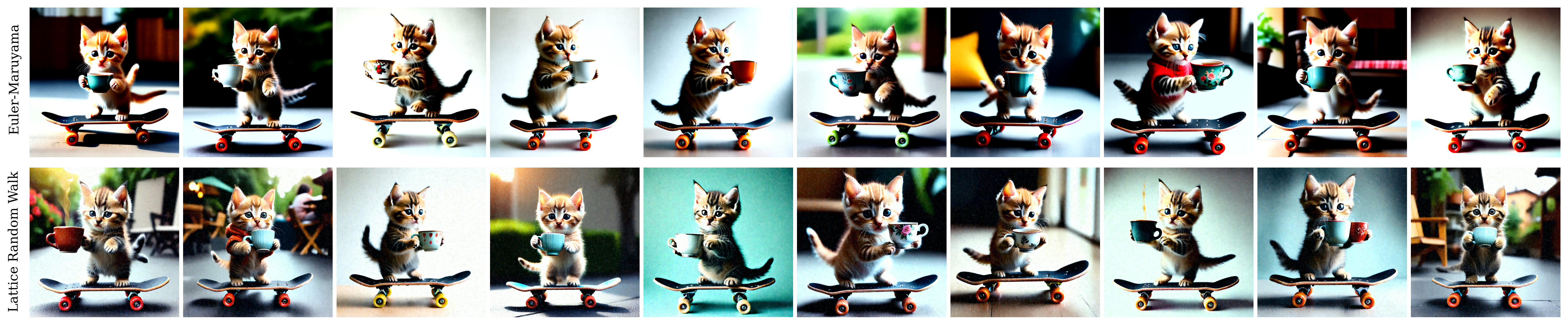}\label{fig:sd35:n50}}
  \vspace{0.5em}
  \subfloat[$\delta_t=1/25$.]{\includegraphics[width=\linewidth]{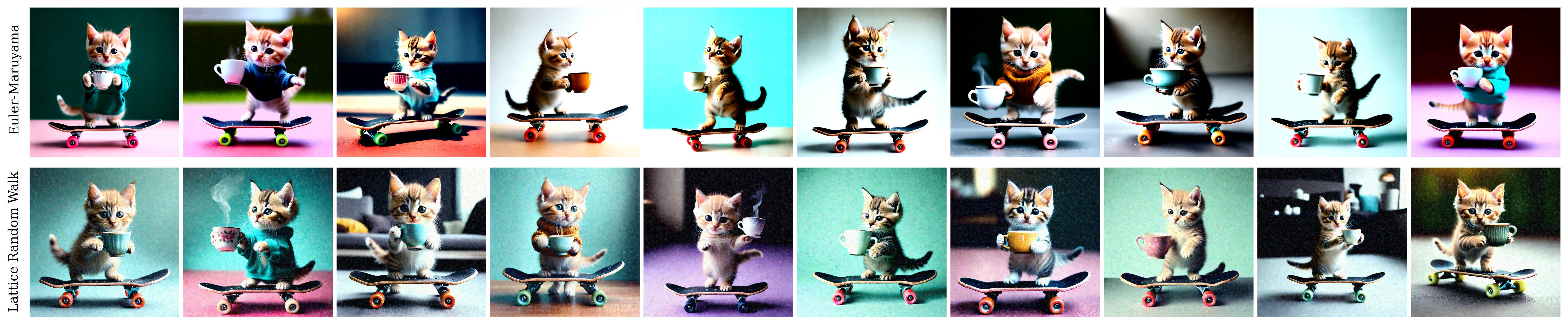}\label{fig:sd35:n25}}
  \caption{\textbf{Stable Diffusion 3.5 images generating with Euler-Maruyama and LRW}. Using $\delta_x(t) = \sqrt{\delta_t}\sigma(t)$,
  which in this case is time-varying. The images were generated with the prompt ``\textit{A kitten riding a skateboard holding a cup of tea}''
  using (a) 50 and (b) 25 discretisation steps (until terminal time $T=1$).}
  \label{fig:sd35}
\end{figure}

In Figure~\ref{fig:sd35} we compare the Euler-Maruyama and LRW discretisations for generating
prompted images from the pretrained Stable Diffusion 3.5 model~\cite{esser2024scaling}, which provide
$s(x,t)$ and $\varsigma(t)$ in~\eqref{eq:diffusion_sde}.
In all cases, we set the level of noise $\alpha(t) = - a \dot{\varsigma}(t)\varsigma(t)$ 
following~\cite{karras2022elucidating, song2020score} with $a = 0.3$. In this case, unlike the previous experiments,
the diffusion term $\sigma(t) = \sqrt{2\alpha(t)}$ is time-varying, but we can still apply
the rule of thumb from Section~\ref{subsec:dx_constraints} with a time-varying
$\delta_x(t) = \sqrt{\delta_t}\sigma(t)$.

A quantitative numerical analysis comparing Fréchet inception distance between EM and LRW over varying steps is found in Appendix~\ref{sec:fid}.

We can see from Figure~\ref{fig:sd35} that for the 50 timesteps, the LRW discretisation
is able to generate images that are equivalent in quality to that of Euler-Maruyama.
For 25 timesteps, the LRW images are still of high quality, but exhibit more graininess
or similary to the original noise source $t=0$. We posit that this is due to the fact that
the larger second moment of the Euler-Maruyama discretisation~\eqref{eq:second_moment}
is able to propagate the images further away from the initial noise source $t=0$ in
absolute terms, this pairs with the prescense of the stocasticity $\alpha(t) > 0$ acting
as an error correcting mechanism~\cite{schaeffer2025effect}.

Overall, Figure~\ref{fig:sd35} confirms that the LRW discretisation is scalable to
large-scale, modern stochastic differential equation models for machine learning pipelines.

\section{Discussion}\label{sec:discussion}

In this work, we introduced a novel LRW discretisation
for the simulation of stochastic differential equations and proved its weak convergence
to the true SDE. Unlike existing discretisations,
with the LRW the computation of the drift and diffusion functions only
appears in the sampling of a ternary random variable (or binary under certain parameter
setting, see Section~\ref{subsec:dx_constraints}). This means that the LRW
not only entirely avoids the need for Gaussian random variable generation but also
is significantly more robust to quantisation error and exploding drift functions,
as confirmed numerically in Sections~\ref{subsec:exp_ou} and \ref{subsec:exp_non_lipschitz}.
An additional consequence of the bottleneck computation being suppressed into the sampling
of a ternary random variable is that the LRW discretisation unlocks the
potential to use stochastic computing architectures that have historically struggled with
the problem of aggregating output bits to regain high precision output, a problem that
is sidestepped entirely by the LRW discretisation.

We conclude with discussing the limitations of the introduced LRW discretisation
as well as prominent directions for future work.

\subsection{Limitations}

As discussed in Section~\ref{subsec:diag_diffusion} the presented LRW discretisation
is limited to diagonal diffusion matrices. This is a strong assumption, but in practice
is rarely a problem as the vast majority of SDEs in application are formulated with
diagonal or scalar diffusion coefficients. Yet, the extension to dense diffusion
is a natural direction for future work which we perceive as achievable perhaps
inspired by the Lamperti transform (Appendix~\ref{sec:lamperti}), however one would
have to be careful to avoid matrix inversions at each iteration which can be costly.

Section~\ref{subsec:weak_convergence} presents a proof of the weak convergence of the 
LRW discretisation and at weak order~1. Under some restrictions of the SDE~\eqref{eq:sde}
weak order 2 convergence can be achieved through relatively straightforward modifications
to the Euler-Maruyama discretisation, such as the BAOAB-limit scheme~\cite{leimkuhler2013rational}
for the case of constant, scalar diffusion coefficient. It may be possible to adopt similar
ideas to construct LRW discretisations that are weak order 2 under similar SDE settings,
although the LRW requirement that all steps need to be stochastic will need to be carefully
considered.

We have not discussed the analysis of strong convergence, which measures the accuracy of individual sample paths 
rather than expectations. Strong convergence analysis for LRW is significantly more challenging than 
for traditional methods like Euler-Maruyama due to the discrete nature of the increments and the
coupling between the spatial stepsize $\delta_x$ and temporal stepsize $\delta_t$. However, 
weak convergence is generally more relevant for most applications, particularly in machine learning 
and statistical inference where we are primarily interested in statistical properties (expectations, 
variances, etc.) rather than the accuracy of individual trajectories. This is especially true for 
applications like diffusion models and Bayesian sampling where the goal is to generate samples from 
a target distribution rather than to accurately track specific paths.

In the numerical study in Section~\ref{sec:experiments}, we have focussed the experiments
on comparison with Euler-Maruyama, despite there being other discretisations to choose from
(see Table~\ref{tab:sde-methods}). We have done so because the LRW discretisation
is general-purpose in the sense that the only restriction on~\eqref{eq:sde} is that the
diffusion matrix $\sigma(x,t)$ is diagonal, in particular it supports time and/or space-varying
diffusion coefficients. This in contrast to other discretisations which either assume
a time-homogeneity (e.g. BAOAB-limit scheme~\cite{leimkuhler2013rational}),
or require multiple drift evaluations per step (e.g. Heun's method~\cite{kidger2020neural}).
Along with the fact that Euler-Maruyama remains the most popular discretisation for SDEs
in modern machine learning diffusion models~\cite{karras2022elucidating, song2020score}.
Therefore we have focussed on Euler-Maruyama as a general-purpose baseline for comparison,
in follow-up work as we focus on more specific applications, we will consider other
discretisations that are more appropriate for the application at hand.

We have also not included any experiments on stochastic computing architectures,
although we believe that the discrete nature of the LRW discretisation makes it
ideally suited for stochastic computing architectures. However, due to the additional
complexity of accurately simulating stochastic computing primitives we have decided to limit
the scope of this work to the simulation of SDEs and justification of (an exact implementation of)
the LRW discretisation in its own right. Future work will detail explicitly potential implementations
and simulations of stochastic computing architectures.

\subsection{Future Work}

Naturally, the above limitations open up several directions for future work
regarding dense diffusion matrices, higher-order schemes and strong convergence as discussed
above.

Additionally, the development of specialized stochastic computing hardware implementations represents 
perhaps the most transformative direction and one we are actively pursuing~\cite{melanson2025thermodynamic,belateche2025scaling,coles2023thermodynamic_published,aifer2025solving,duffield2023thermodynamic,donatella2024thermodynamic}.
The discrete nature of LRW outputs makes it 
ideally suited for stochastic computing architectures. Indeed, developing dedicated hardware 
that can efficiently generate and manipulate the binary/ternary random variables  could 
unlock many orders of magnitude speedup for SDE simulation in applications such as
large-scale diffusion models, as unlocked
by the introduction of the lattice random walk discretisation.

\bibliographystyle{apalike}
\bibliography{ThermoMasterBib.bib}

\section*{Acknowledgments}

We thank Lars Holdijk, Rob Brekelmans, Gavin Crooks, Jan Ole Ernst, and Max Welling for valuable feedback during the writing of the manuscript. We thank the Advanced Research and Invention Agency’s (ARIA) Scaling Compute programme for funding this work.

\section*{Author contributions}
S.D. invented the binary form of the algorithm. M.A. derived the binary discretization for unit step size. M.A. assisted with the derivation to the more general ternary form. S.D. and D.M. ran the experiments. M.A., Z.B. and P.C. contributed to fundamental early work on a version for linear systems and its connection to stochastic computing. All authors reviewed the manuscript.

\section*{Competing Interests}
All authors declare no financial or non-financial competing interests.

\appendix

\section{Weak Convergence}\label{sec:weakproof}

We now prove the lattice random walk discretisation for the SDE \eqref{eq:sde} converges with weak order 1. We restate the theorem from Section~\ref{subsec:weak_convergence} for clarity.

\begin{theorem_reset}[Weak convergence of the LRW discretisation]
Consider the SDE \eqref{eq:sde} with drift function $f(x,t)$ and diagonal diffusion
matrix $\sigma(x,t)$ that are sufficiently smooth.
Let $\varphi: \mathbb{R}^d \to \mathbb{R}$ be a test function with bounded derivatives. 
Then the LRW discretisation~\eqref{eq:lrw} with spatial stepsize
$\delta_{x, i} = \Theta(\sqrt{\delta_t})$ has weak order 1, i.e.,
\[
\bigl|\E[\varphi(x_N)] - \E[\varphi(X(T))]\bigr| = O(\delta_t),
\]
where $x_N$ is the discretized solution at time $T = N\delta_t$ and $X(T)$ is the true SDE solution.
\end{theorem_reset}
\begin{proof}We prove weak convergence with order 1 by using the infinitesimal generator of the true SDE to bound the local and then global error between expectations with respect to the true SDE and samples from the LRW discretisation.


\paragraph{Generator.}
Let $\varphi\colon\mathbb{R}^d\to\mathbb{R}$ be a sufficiently smooth test function and denote by $X(t)$ the true solution of \eqref{eq:sde} with diagonal $\sigma(x, t)$. Then the generator of the true solution has the form [\citealp{pavliotis2014stochastic}, page 50]
\[
  L\varphi(x) \;=\; f(x,t)\cdot\nabla\varphi(x)\;+\;\tfrac12\sum_{i=1}^d\sigma_i(x,t)^2\,\partial_{ii}\varphi(x)\,,
\]
(note the above is the simplified generator in the case of diagonal $\sigma(x, t)$).

\paragraph{Increment moments.}
Writing $x_{+}=x+\Delta$ with diagonal $\sigma(x, t)$, then from the definitions of $p_\pm(x,t)$ in \eqref{eq:ternary-p} we get
\begin{align*}
  \E[\Delta]
  &= (p_{+}-p_{-})\,\delta_x
   = \delta_t f(x,t)\\
  \E[\Delta^2]
  &= (p_{+}+p_{-})\,\delta_{x}^2
   = \delta_t \sigma(x, t)^2,\\
   \E[\Delta_i \Delta_j] &= \E[\Delta_i] \E[\Delta_j] = \delta_t^2 f_i(x,t)f_j(x,t) \qquad \text{for $i\neq j$,}
\end{align*}
and also third moments
\begin{align*}
   \E[\Delta^3]
   &= \delta_t \delta_x^{2} f(x,t),\\
   \E[\Delta^2_i \Delta_j] &= \E[\Delta^2_i] \E[\Delta_j] = \delta_t^2 \sigma_i(x, t)^2 f_j(x,t),  \qquad \text{for $i\neq j$,} \\
   \E[\Delta_i \Delta_j \Delta_k] &= \E[\Delta_i] \E[\Delta_j] \E[\Delta_k] = \delta_t^3 f_i(x,t)f_j(x,t)f_k(x,t), \qquad \text{for $i\neq j \neq k \neq i$.}
\end{align*}
Where in all cases the moments are conditional on $x$.

\paragraph{One‐step expansion.}
Expanding in $\Delta$ gives
\begin{align*}
  \E[\varphi(x_{+})]
  &= \varphi(x)
  + \nabla\varphi(x)^\top \E[\Delta]
  + \frac12 \sum_{i,j=1}^d  \E[\Delta_i \Delta_j]\partial_{ij}\varphi(x)
  + O\bigl(\E[\|\Delta\|^3]\bigr).
\end{align*}

We conclude
\begin{align*}
  \E[\varphi(x_{+})]
  =& \varphi(x)
  + \delta_t\,L\varphi(x)
  + \frac12 \, \delta_t^2 \sum_{i\neq j} f_i(x,t) f_j(x,t) \,\partial_{ij}\varphi(x)
    +O(\delta_t\,\delta_x^2) + O(\delta_t^2),
\end{align*}

where we used \(\E[\|\Delta\|^3]=O(\delta_t\,\delta_x^2) + O(\delta_t^2)\).

\paragraph{Local weak error.}
From the one‐step expansion we have
\[
  \E[\varphi(x_{+})]
  = \varphi(x)
  + \delta_t\,L\varphi(x)
  + O(\delta_t^2)
  + O(\delta_t\,\delta_x^2),
\]
so the local weak error is
\[
  R(x)
  := \E[\varphi(x_{+})] - \varphi(x) - \delta_t\,L\varphi(x)
  = O(\delta_t^2)
  + O(\delta_t\,\delta_x^2)
\]

\paragraph{$\delta_x$ constraint.} We now use $\delta_{x,i} = \Theta(\delta_t^{1/2})$ so that \(O(\delta_t\,\delta_x^2)=O(\delta_t^2)\).

\paragraph{Global weak error.}
Hence $R(x) = O(\delta_t^2)$,
and over \(N=T/\delta_t\) steps the global weak error is
\[
  \bigl|\E[\varphi(x_N)] - \E[\varphi(X(T))]\bigr|
  = N\,O(\delta_t^2)
  = O(\delta_t),
\]
i.e.\ the scheme is weak‐order~1 for $\delta_{x,i} = \Theta(\delta_t^{1/2})$.

\end{proof}

\section{Lamperti Transform}\label{sec:lamperti}
Given an SDE with time-varying but not state-varying diffusion coefficient
\begin{align*}
dx_t &= f(X_t,t)\,dt + \sigma(t)\,dW_t, 
\end{align*}
define the rescaling
\begin{align*}
z_t &= \kappa\,\sigma(t)^{-1}\,x_t.
\end{align*}

Then by Itô’s formula one finds
\begin{align}\label{eq:lamperti}
dz_t
&=\left[
\;\frac{d}{dt}\bigl(\sigma(t)^{-1}\bigr)\,\sigma(t)\,z_t
\;+\;
\kappa\,\sigma(t)^{-1}\,f\!\bigl(\tfrac{1}{\kappa} \sigma(t) z_t,t\bigr)
\right]\,dt
\;+\;\kappa\,dW_t,
\end{align}
where $\sigma(t)^{-1}$ is the matrix inverse of $\sigma(t)$ and $\kappa \in (0, \infty)$ is a scalar.

And we can transform back with
\begin{align*}
    x_t = \frac{1}{\kappa} \sigma(t) z_t.
\end{align*}

For more details, including the case of state varying diffusion coefficient $\sigma(x, t)$ see \cite{moller2010state}.

\section{Reparameterisation to Integers}\label{subsec:reparam}

Formally the discretisation in~\eqref{eq:lrw} is defined on a lattice of points but not
integers since the spatial stepsize $\delta_x$ is real valued. However we can consider
the reparameterization
\begin{align}\label{eq:reparam}
    x = \delta_x z \iff z = \delta_x^{-1} x.
\end{align}
This gives an integer update rule in $z$ (assuming $z_0$ is an integer)
\begin{align}
    z_{t+\delta_t} &= 
    z_t + \Delta(z_t, t), \nonumber \\
    \mathbb{P}[\Delta_i(z, t) = \Delta_i]
    &=
    \begin{cases}
        p_{-,i}(z,t), &\text{if } \Delta_i = -1, \\
        1 - p_{-,i}(z,t) - p_{+,i}(z,t), &\text{if } \Delta_i = 0, \\
        p_{+,i}(z,t), &\text{if } \Delta_i = 1, \\
    \end{cases} \label{eq:ternary-z}
\end{align}
with probabilities
\begin{align*}
    p_\pm(z, t) &= \frac{1}{2} \delta_t\delta_x^{-1} \left[\pm f(\delta_x z, t) +\delta_x^{-1}\sigma(\delta_x z, t)^2\right].
\end{align*}

\section{From Flows to SDEs}\label{sec:from_flows_to_sdes}

In this section we summarize the framework introduced in \cite{karras2022elucidating} unifying diffusion and flow models at inference time as a family of SDEs with flexible amount of noise (including zero noise i.e. an ODE as a special case).

Flow matching models \cite{karras2022elucidating} typically learn to reverse a noising process, where the noising
process is defined as the mollified Gaussian for some noise schedule $\sigma(\tau)$
\begin{align*}
p_\tau(x_\tau \mid x_0) = \mathcal{N}(x_\tau \mid x_0, \sigma(\tau)^2 \mathbb{I}),
\end{align*}
where we have used $\tau$ rather than $t$ to indicate this is the forward or noising process,
(before later using $t=1-\tau$ to indicate the more complicated reverse or denoising process to match~\eqref{eq:sde}).

The output of the flow matching training is a velocity $u_\theta(x_\tau, \tau)$ that can be
used within an ordinary differential equation (ODE) solver that reverses the process for $\tau=1 \to 0$
\begin{align*}
x_1 \sim \mathcal{N}(x_1 \mid 0, \sigma_\text{max}^2 \mathbb{I}), \qquad \qquad dx = u_\theta(x, \tau)d\tau.
\end{align*}
It can be unnatural to think of an ODE running in reverse time with $d\tau < 0$,
instead we can rewrite the above ODE as
\begin{align*}
x_0 \sim \mathcal{N}(x_0 \mid 0, \sigma_\text{max}^2 \mathbb{I}), \qquad \qquad dx = -u_\theta(x, 1-t)dt,
\end{align*}
where now we are advancing $t=0 \to 1$ with $dt > 0$.

As described in~\cite{karras2022elucidating}, the trained flow matching model actually
recovers a scaled version of the score function $s(x, t) = \nabla_x \log p_t(x, t)$.
That is, the ordinary differential equation, moving forward in time $t=0\to1$, can be written as
\begin{align*}
dx = -\dot{\varsigma}(t) \varsigma(t)\nabla_x \log p_t(x_t, t) dt,
\end{align*}
where $\varsigma(t) = \sigma(1-t)$. Since $\sigma(\tau)$ is typically increasing with $\tau$ then $\varsigma(t)$ is decreasing with $t$ and $\dot{\varsigma}(t)$ is negative. 

We can convert the velocity to the score with knowledge of the noise schedule $\varsigma(t)$
\begin{align*}
\nabla_x \log p_t(x_t, t) = \frac{1}{\dot{\varsigma}(t)\varsigma(t)}u_\theta(x_t, 1-t).
\end{align*}

Further, this ODE can be generalised to an SDE by adding any amount of Langevin noise
that is invariant for the instantaneous distribution $p(x,t)$~\cite{ma2015complete}
\begin{align*}
dx_t = -\dot{\varsigma}(t) \varsigma(t)\nabla_x \log p_t(x_t, t) dt 
+ \alpha(t)\nabla_x \log p_t(x_t, t) dt + \sqrt{2 \alpha(t)}dw_t,
\end{align*}
matching that of \eqref{eq:diffusion_sde} where $\alpha(t) \geq 0$ controls the level of added noise but
remarkably does not affect the marginal distributions $p(x,t)$~\cite{karras2022elucidating, duffield2026complete}. A common choice is $\alpha(t) = - a \dot{\varsigma}(t) \varsigma(t)$ for some value of $a > 0$ (noting that $\dot{\varsigma}(t) <0$ implies $\alpha(t)>0$).

\section{Fréchet Inception Distance Experiment}\label{sec:fid}

In Figure~\ref{fig:fid}, we extend the experiment in Section~\ref{subsec:exp_diffusion} to give a numerical quantification of the quality of images generated by LRW - we use the commonly used \emph{Fréchet Inception distance} to numerically quantify images generated with gold standard images (in this case running SD3.5 with a small stepsize over 100 steps). In each case we generate 100 images using the same prompt as in Section~\ref{subsec:exp_diffusion}. We see that Euler-Maruyama out-performs at low number of steps (and therefore large stepsize), with LRW catching up for more steps and converging at a similar rate. This behaviour is not unsurprising given the harsh quantisation of the LRW increment and indeed it is somewhat remarkable that LRW performance is close - noting that after 50 steps EM and LRW are not visually distinguishable as seen in Figure~\ref{fig:sd35:n50}.

\begin{figure}[h]
  \centering
  \includegraphics[width=0.5\linewidth]{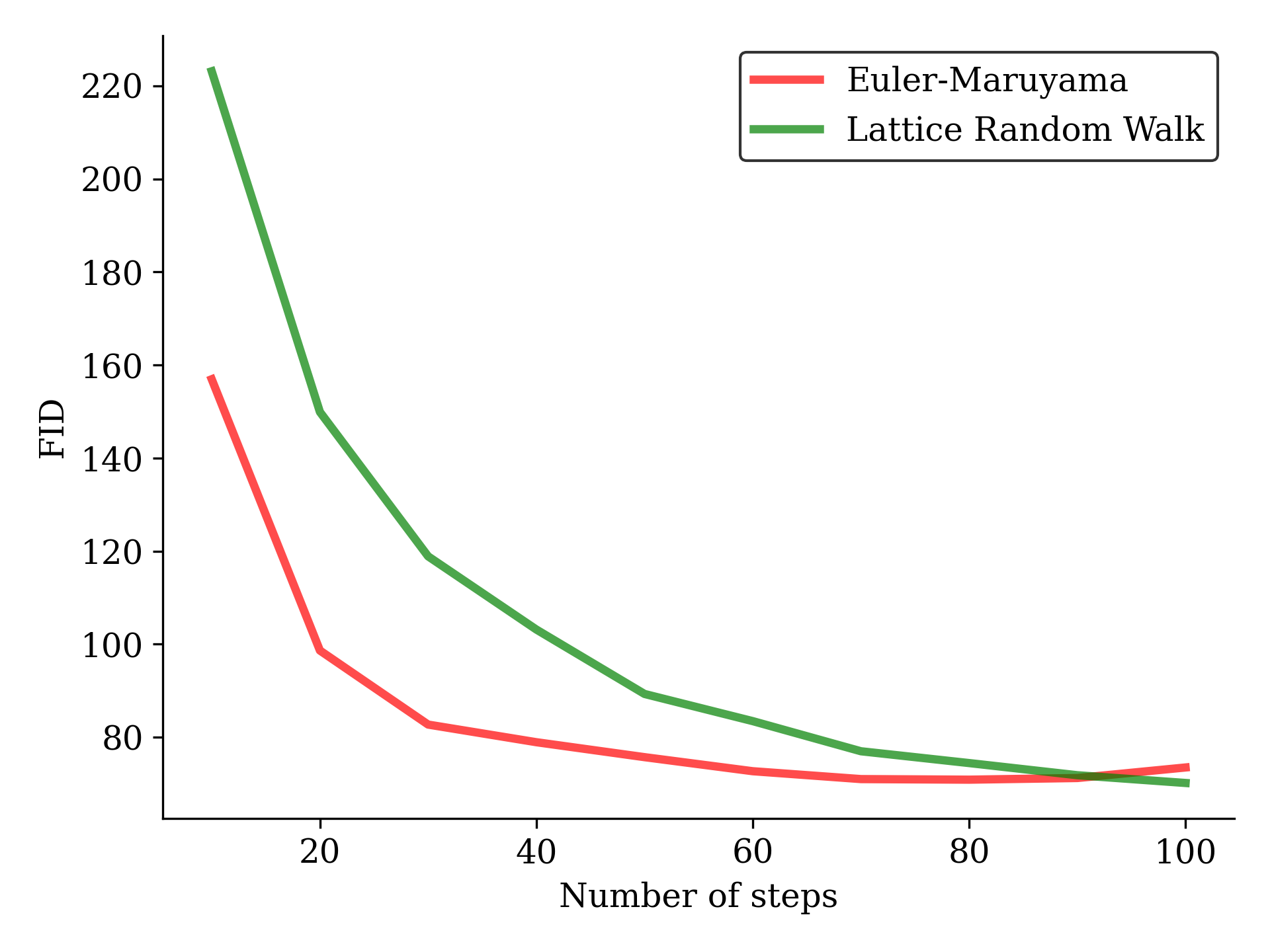}
  \caption{\textbf{FID distance from gold-standard images} All images were generated with Stable Diffusion 3.5 with the prompt ``\textit{A kitten riding a skateboard holding a cup of tea}'' as in Figure~\ref{fig:sd35}.}
  \label{fig:fid}
\end{figure}

\section{Simlulating Ornstein-Uhlenbeck Processes with Stochastic Computing}\label{sec:stochastic_ou}
As a concrete example of the LRW discretisation with stochastic computing, we can consider simulating an Ornstein-Uhlenbeck process
$$
dx = -(Ax - b)dt + \sigma dw.
$$
An iteration of binary LRW amounts to sampling a bipolar vector $v_t \in \{-1, 1\}^d$ with moments
\begin{align*}
    \mathbb{E}[v_t] = - \sigma^{-1}\sqrt{\delta_t} (Ax_t - b), \quad
    \mathbb{E}[v_t^2] = 1,
\end{align*}
then we get LRW update $x_{t+\delta_t} = x_t + \sqrt{\delta_t}\sigma v_t$.

The bipolar increment expectation can be written as a single matrix vector product
\begin{align}
    \mathbb{E}[v_t] = - \sigma^{-1}\sqrt{\delta_t} By_t, \quad
    B = (A \,,\; {-}b), \quad y_t = (x_t^\top \,, \; 1)^\top. \label{eq:mat_vec}
\end{align}
A common method for (scaled) addition in stochastic computing is the so-called multiplexer approach (e.g.\ Section 3 in~\cite{gross2019stochastic}). Generalizing to a dot product we can write the protocol for input stochastic bipolar vector $Y \in \{-1, 1\}^n$ with expected value $\mathbb{E}[Y_i] = y_i$ and real weights $w \in \mathbb{R}^n$ as
\begin{align*}
    k &\sim \text{Categorical}\left(\frac{|w_1|}{\|w\|_1}, \dots, \frac{|w_n|}{\|w\|_1}\right),
    \\
    b &= \text{sign}(w_k) Y_k,
\end{align*}
where $\|w\|_1 = \sum_j |w_j|$. This gives expected value as the (scaled) dot product
\begin{align*}
    \mathbb{E}[b] = \frac{w^\top y}{\|w\|_1},
\end{align*}
which can be repeated in parallel for the $d$ rows of the matrix-vector product in \eqref{eq:mat_vec}.

To apply this protocol, we encode $y_t$ as a stochastic bipolar vector: assuming $\|y_t\|_\infty \leq M$, each component $y_{t,j}$ is represented by a random bit $Y_j \in \{-1, +1\}$ with $\mathbb{P}[Y_j = +1] = (1 + y_{t,j}/M)/2$, giving $\mathbb{E}[Y_j] = y_{t,j}/M$. Sampling from the categorical distribution na\"ively requires $O(n)$ time, but using the alias method~\cite{walker1977efficient} with $O(n)$ preprocessing, each sample can be drawn in $O(1)$ time. This gives $O(d^2)$ preprocessing (for $d$ alias tables, one per row of $B$) and $O(d)$ work per LRW iteration, or $O(1)$ parallel time with $O(d)$ threads since the $d$ categorical samples are independent.

For the protocol to be valid, we require $|\mathbb{E}[v_{t,i}]| \leq 1$ for each coordinate $i$. Combined with a bound $\|y_t\|_\infty \leq M$ (which holds if the OU process remains bounded), we obtain a maximal stepsize
\begin{align*}
    \delta_t &\leq \left(\frac{\sigma}{M\bar{B}}\right)^2,
\end{align*}
where $\bar{B} = \max_i \sum_j |B_{ij}|$ is the maximum absolute row sum of $B$.

\end{document}